\DeclareFontFamily{U}{mathx}{\hyphenchar\font45}
\DeclareFontShape{U}{mathx}{m}{n}{
      <5> <6> <7> <8> <9> <10>
      <10.95> <12> <14.4> <17.28> <20.74> <24.88>
      mathx10
      }{}
\DeclareSymbolFont{mathx}{U}{mathx}{m}{n}
\DeclareMathAccent{\widecheck}{0}{mathx}{"71}
\DeclareMathAccent{\wideparen}{0}{mathx}{"75}
\newtheorem{theorem}{Theorem}[section]
\newtheorem{lemma}[theorem]{Lemma}
\newtheorem{corollary}[theorem]{Corollary}
\newtheorem{proposition}[theorem]{Proposition}
\theoremstyle{remark}
\newtheorem{remark}[theorem]{Remark}
\newtheorem*{remark*}{Remark}
\theoremstyle{definition}
\newtheorem{definition}[theorem]{Definition}
\numberwithin{equation}{section}
\newcommand{\A}{\EuScript{A}}
\newcommand{\V}{\EuScript{V}}
\newcommand{\supp}{\operatorname{supp}}
\newcommand{\interior}{\operatorname{int}}
\newcommand{\Spec}{\operatorname{Spec}}
\newcommand{\vertiii}[1]{{\left\vert\kern-0.25ex\left\vert\kern-0.25ex\left\vert #1 
    \right\vert\kern-0.25ex\right\vert\kern-0.25ex\right\vert}}
\newcounter{smallromans}
\newenvironment{romanenumerate}
{\begin{list}{{\normalfont\textrm{(\roman{smallromans})}}}%
  {\usecounter{smallromans}\setlength{\itemindent}{0cm}%
   \setlength{\leftmargin}{5.5ex}\setlength{\labelwidth}{5.5ex}%
   \setlength{\topsep}{.5ex}\setlength{\partopsep}{.5ex}%
   \setlength{\itemsep}{0.1ex}}}%
{\end{list}}
\newcounter{smallromansdash}
\newcounter{bigromans} 
  {\end{list}}
\begin{document}
\date{November 23, 2017}
\title[The compatibility ordering on $C(X)$]{Recovering a compact Hausdorff space $X$ from the compatibility ordering on $C(X)$}
\author[T.~Kania]{Tomasz Kania}
\address{Mathematics Institute,
University of Warwick,
Gibbet Hill Rd, 
Coventry, CV4 7AL, 
England and Institute of Mathematics, Czech Academy of Sciences, \v{Z}itn\'{a} 25, 115~67 Prague 1, Czech Republic}
\email{tomasz.marcin.kania@gmail.com}
\author[M.~Rmoutil]{Martin Rmoutil}
\address{Department of Mathematics Education, Faculty of Mathematics and Physics, Charles University, Sokolovsk\'a 83, 186~75 Prague~8, Czech Republic}
\email{martin@rmoutil.eu}

\subjclass[2010]{46E10, 46T20 (primary), and 06F25 (secondary)} 
\keywords{compatibility ordering, compatibility isomorphism, Gelfand--Kolmogorov theorem, Banach--Stone theorem, Milgram's theorem, Kaplansky's theorem, lattice, ultrafilter in a lattice, compact Hausdorff space, automatic continuity}

\thanks{The authors acknowledge with thanks funding received from the European Research Council / ERC Grant Agreement No.~291497.}
\begin{abstract}Let $f$ and $g$ be scalar-valued, continuous functions on some topological space. We say that $g$ dominates $f$ in the compatibility ordering if $g$ coincides with $f$ on the support of $f$. We prove that two compact Hausdorff spaces are homeomorphic if and only if there exists a compatibility isomorphism between their families of scalar-valued, continuous functions. We derive the classical theorems of Gelfand--Kolmogorov, Milgram and Kaplansky as easy corollaries to our result as well as a theorem of Jarosz (\emph{Bull.~Canad.~Math.~Soc.} 1990) thereby building~a common roof for these theorems. Sharp automatic-continuity results for compatibility isomorphisms are also established.\\

\noindent \emph{Added on 30.03.2021}: Unfortunately Theorem 1.1 of the present manuscript is flawed. Erratum and addendum written jointly with D.~H.~Leung is attached. Besides providing an amendment to the said statement, it also contains more complete proof of Proposition 4.1. Theorems 1.2--1.3 are unaffected.\end{abstract}
\maketitle

\section{Introduction and the main result}

Let $X$ be a topological space and denote by $C(X)$ the family of all scalar-valued, continuous functions on $X$. In the case where $X$ is compact, $C(X)$ carries a wealth of extra structure; indeed $C(X)$ may be already viewed as a ring or an algebra (here compactness is not essential), a~metric or a Banach space when furnished with the supremum metric, a Banach or a C*-algebra when the algebraic and metric structures are combined or as a~lattice when equipped with the pointwise ordering. Over the past century, a plethora of results recovering the underlying compact space $X$ from $C(X)$ was obtained, \emph{i.e.}, results where one seeks the existence of a homeomorphism between the two compact spaces in the presence of an isomorphism between families of continuous functions in one of the above-listed categories. \smallskip

Probably the best-known result in this direction is the Gelfand--Kolmogorov theorem (\cite{gk}), which asserts that two compact Hausdorff spaces $X$ and $Y$ are homeomorphic if and only if $C(X)$ and $C(Y)$ are isomorphic as rings. The Banach--Stone theorem says that the same is true if $C(X)$ and $C(Y)$ are isometric (\cite[Th\'{e}or\`{e}me IX.4.3]{banach} in the metric case and \cite{stone} in the general case); in the light of the Mazur--Ulam theorem (\cite{mazurulam}), it is not relevant whether the isometry is linear or not. Milgram's theorem (\cite[Theorem A]{milgram}) recovers $C(X)$ from the multiplicative semigroup of $C(X)$, which means that $X$ and $Y$ are homeomorphic if and only if there exists a multiplicative bijection between $C(X)$ and $C(Y)$. Finally, a theorem of Kaplansky (\cite{kaplansky}) yields the same conclusion in the presence of a bijection between $C(X)$ and $C(Y)$ which respects the pointwise ordering. As remarked by Kaplansky himself, his result subsumes the Gelfand--Kolmogorov theorem, Milgram's theorem as well as the Banach--Stone theorem (\cite[p.~622]{kaplansky}).\smallskip

The main result of this paper is exactly in the same spirit and implies Kaplansky's theorem as an easy consequence (so that we derive also all the previously mentioned theorems). Curiously, \emph{en route} to the proof, we have managed to circumvent the need of alluding explicitly to the fact saying that maximal ideals of $C(X)$, regarded as a ring, where $X$ is a compact Hausdorff space, are kernels of evaluations at points from $X$ (this observation is crucial for most of the proofs of the previously mentioned theorems that we are aware of). In order to explain the declared main result, we require to introduce some terminology. \smallskip

Given two scalar-valued functions $f$ and $g$ on a topological space $X$, we write $$f\preceq g\text{ whenever }f(x)=g(x)\text{ for every }x\in \supp f,$$ where $\supp f$ stands for the closure of the set of points in $X$ where $f$ is non-zero (here we used a fixed scalar field either of real or complex numbers). It is readily seen that $\preceq$ is an ordering in $C(X)$ with the least element being the zero function. For brevity, we have decided to term it \textit{the compatibility ordering} and we do hope this name is self-explanatory. This ordering is therefore another piece of structure that may be imposed on $C(X)$. In addition to the ordering itself, we shall be concerned with morphisms between families of continuous functions which preserve it (without assuming any extra properties such as linearity or continuity). Thus, a map $T\colon C(X)\to C(Y)$ is a \emph{compatibility morphism} if $Tf\preceq Tg$ whenever $f\preceq g$ for $f,g\in C(X)$. We will call bijective compatibility morphisms whose inverses are also compatibility morphisms \emph{compatibility isomorphisms}. In other words, compatibility isomorphisms are isomorphisms between families of continuous functions regarded as partially ordered sets when furnished with the compatibility ordering. \smallskip

We are now in a position to phrase the first main result of the paper. The proofs are postponed to Section~\ref{mainproof}.
\begin{theorem}\label{mainish}Let $X$ and $Y$ be compact Hausdorff spaces. Then $X$ and $Y$ are homeomorphic if and only if there exists a compatibility isomorphism between $C(X)$ and $C(Y)$. \end{theorem}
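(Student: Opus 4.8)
The plan centers on recovering $X$ from the order structure $(C(X),\preceq)$ alone. The key idea is that, since $\preceq$ is purely order-theoretic, a compatibility isomorphism $T\colon C(X)\to C(Y)$ must map any purely order-theoretic object to its counterpart in $C(Y)$; so I would first isolate the order-theoretic data that encodes the topology of $X$. The natural candidate is the family of maximal elements among functions with ``small'' support, or more robustly, certain filters/ultrafilters in the poset. Concretely, for each point $x\in X$ the constant function is not $\preceq$-special, so instead I would look at the collection $\mathcal{F}_x=\{f\in C(X):f(x)\neq 0\}$ and its complement. Observe that $f\preceq g$ means $g$ extends $f$ off its support while agreeing on it; the minimal nonzero functions above a given $f$, and the way functions can be ``glued,'' should detect disjointness of supports. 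The first concrete step, then, is to give an order-theoretic characterization of when two functions have disjoint supports (equivalently, $f\cdot g=0$), since disjointness is visibly preserved by $T$ and is the lattice-theoretic skeleton of the topology.

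Once disjointness is captured order-theoretically, I would define, for each $x\in X$, the set $\mathcal{U}_x$ of functions that are nonzero at $x$, and characterize the maximal such collections as the ``points'' of an abstractly defined space. The plan is to show these collections are exactly the maximal proper subsets of $C(X)$ closed under the relevant order operations and avoiding a fixed ``zero-detecting'' predicate; this makes them intrinsic and hence permuted by $T$. This is morally the ultrafilter-in-a-lattice approach flagged in the keywords, and it lets me sidestep the usual appeal to maximal ideals being evaluation kernels. I would then equip the set of these intrinsic points with the topology generated by the order-definable ``basic'' sets (supports of functions, themselves recoverable from $\preceq$) and verify that the resulting space is canonically homeomorphic to $X$ when $X$ is compact Hausdorff, using normality and Urysohn's lemma to ensure enough functions exist to separate the intrinsic points and to witness that distinct points of $X$ give distinct collections.

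The decisive step is proving that $T$ induces a well-defined bijection on these intrinsic points and that this bijection is a homeomorphism. Since $T$ preserves $\preceq$ in both directions, it preserves disjointness of supports, hence maps each maximal collection $\mathcal{U}_x$ onto some maximal collection $\mathcal{U}'_{\varphi(x)}$ in $C(Y)$, defining a bijection $\varphi\colon X\to Y$. Continuity of $\varphi$ and of $\varphi^{-1}$ follows because $T$ carries order-definable basic open sets to order-definable basic open sets; as $X$ and $Y$ are compact Hausdorff, a continuous bijection between them is automatically a homeomorphism, so it suffices to establish continuity in one direction. The converse implication is routine: a homeomorphism $h\colon X\to Y$ induces $f\mapsto f\circ h^{-1}$, which manifestly preserves supports and hence $\preceq$ in both directions.

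I expect the main obstacle to be the order-theoretic characterization of support-disjointness and, relatedly, the proof that the intrinsic ``points'' recovered from $\preceq$ correspond bijectively and homeomorphically to the actual points of $X$. The subtlety is that $\preceq$ remembers the values of $f$ on $\supp f$, not merely the support as a set, so I must be careful to extract only the topological information (where functions vanish) while discarding the scalar values, which are not transported faithfully by a nonlinear $T$. Verifying that $T$ respects exactly the vanishing structure --- and no more --- is where the argument will require the most care.
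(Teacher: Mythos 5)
Your overall route---extract disjointness from the order, pass to a lattice of support-type sets, realise points as ultrafilters in that lattice, and topologise them by the order-definable basic sets---is essentially the strategy of the paper (its second, Wallman-style proof). However, one of your concrete steps genuinely fails: the point-surrogates $\mathcal{U}_x=\{f\in C(X)\colon f(x)\neq 0\}$ are \emph{not} order-theoretic objects and are \emph{not} permuted by compatibility isomorphisms. Take $X=[-1,1]$, let $\mathbf{1}$ be the constant function $1$ and $u(t)=1-t$. Both have full support, so each is $\preceq$-maximal; and each has $0$ as its only strict $\preceq$-predecessor (if $h\preceq\mathbf{1}$ then $\{h\neq 0\}=\{h=1\}$ is clopen, hence empty or all of $X$; if $h\preceq u$ then $\partial\supp h\subseteq u^{-1}(0)=\{1\}$, which by connectedness of $[-1,1)$ and regular closedness of $\supp h$ forces $h=0$ or $h=u$). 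Moreover $\mathbf{1}$ and $u$ are incomparable. Hence the bijection $T$ swapping $\mathbf{1}$ and $u$ and fixing every other function is a compatibility automorphism of $C(X)$; yet $\mathbf{1}\in\mathcal{U}_1$ while $T\mathbf{1}=u\notin\mathcal{U}_1$, and in fact $T[\mathcal{U}_1]$ is not equal to $\mathcal{U}_y$ for \emph{any} $y$, since it fails to contain $\mathbf{1}$, which lies in every $\mathcal{U}_y$. So ``$f(x)\neq 0$'' is value data that the ordering simply does not remember; what the ordering remembers is only the regularised vanishing structure $\sigma(f)=\interior\supp f$, equivalently $\varrho(f)=\overline{\interior f^{-1}(0)}$. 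The paper's proof is your plan with exactly this correction: it first proves that $\sigma(f)\mapsto\sigma(Tf)$ is a well-defined, inclusion-preserving bijection of the set lattices (Propositions~\ref{wd} and~\ref{P:tau}---this well-definedness is real work, not automatic), and then takes ultrafilters in the lattice $\{\varrho(f)\colon f\in C(X)\}$, whose elements are \emph{sets} rather than functions, with point ultrafilters $\{F\colon x\in F\}$ (Proposition~\ref{pi}); note that $x\in\sigma(f)$ is strictly weaker than $f(x)\neq 0$.

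A second, related underestimate: you say disjointness ``is visibly preserved by $T$.'' It is preserved, but not visibly---this is the technical heart of the paper (Lemma~\ref{additive}), proved by constructing the auxiliary function $(Tf)\cdot\mathds{1}_{\supp Tg}$, verifying its continuity, and exploiting the bijectivity of $T$ together with the fact that $0$ is the only function $\preceq$-below two orthogonal functions. If you instead want preservation to be formal, you must state and prove an order-theoretic characterisation of orthogonality; one correct such statement is that $fg=0$ if and only if $f$ and $g$ admit a common $\preceq$-upper bound and their only common $\preceq$-lower bound is $0$ (the nontrivial direction again requires a boundary-regularisation argument: if $fg\neq0$ and a common upper bound exists, then $f=g$ on $\overline{W}$ where $W=\{f\neq0\}\cap\{g\neq0\}$, both vanish on $\partial W$, and $f\cdot\mathds{1}_{\overline{W}}$ is a nonzero common lower bound). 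Beware also that your parenthetical ``disjoint supports (equivalently, $f\cdot g=0$)'' is false as stated: $fg=0$ is equivalent to $\sigma(f)\cap\sigma(g)=\varnothing$, not to $\supp f\cap\supp g=\varnothing$, since supports may meet along their boundaries. This is not pedantry---the need to regularise (pass from zero sets and supports to $\varrho$ and $\sigma$) is precisely the phenomenon that breaks your $\mathcal{U}_x$ and that the paper's formalism is built to handle.
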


Theorem~\ref{mainish} nicely complements a recent result of Li and Wong (\cite{lw2014}), where the authors recover a compact Hausdorff space $X$ from linear bijections that preserve functions which do not assume the value zero. It follows from the considerations in the previous paragraph that compatibility isomorphisms ignore such functions and recover the space from functions which \emph{do} assume the value zero.\smallskip

At this point it is reasonable to ask whether compatibility isomorphisms may be described by formulae involving the homeomorphism produced in the proof of Theorem~\ref{mainish} and some other auxiliary functions. Sometimes this can be done, as we prove in Section~\ref{consequences} that, for example, ring isomorphisms $T\colon C(X)\to C(Y)$ are compatibility morphisms and it is known that every such isomorphism is given by composition with a homeomorphism between $Y$ and $X$. We also prove that multiplicative bijections and pointwise-ordering isomorphisms are compatibility isomorphisms too. Milgram has observed in \cite{milgram} that if $X$ and $Y$ are compact Hausdorff spaces, $\varphi\colon X\to Y$ is a homeomorphism and $p$ is a~positive function on $X$, then the mapping $T\colon C(Y)\to C(X)$ defined by
$$(Tf)(z) = |f(\varphi(z))|^{p(z)}\cdot {\rm sgn} f(\varphi(z))\quad (f\in C(Y), z\in X)$$
is multiplicative and if $X$ does not have isolated points, then every multiplicative bijection is of this form. (See \cite{lesnjak} for the description of general continuous multiplicative maps between algebras of continuous functions.) Kaplansky (\cite[p.~629]{kaplansky2}) exhibited a discontinuous pointwise-ordering automorphism of $C(\beta \mathbb{N})$, where $\beta \mathbb{N}$ is the \v{C}ech--Stone compactification of the discrete space of natural numbers. However Kaplansky's paper also contains certain positive results (for example, he established an automatic-continuity result in the case where $X$ is first-countable). Thus, our results constitute a common roof for many classical results in the area, yet in general it is impossible to express an \emph{a priori} given compatibility isomorphism by a more concise formula. This ought to be therefore regarded as the cost of the aimed generality. \smallskip 

Our notion of a compatibility morphism is much more general as for every connected compact Hausdorff space $X$ (including the one-point space) there exist at least $2^{\mathfrak{c}}$ distinct compatibility automorphisms of $C(X)$. To see this, choose a self-bijection $\Phi$ of the set $${\rm GL}(C(X))=\{f\in C(X)\colon f(x)\neq 0\text{ for all }x\in X\}.$$ (This set has cardinality at least continuum so it has at least $2^\mathfrak{c}$ self-bijections.) We define $T$ by setting $Tf = f$ if $f\in C(X)\setminus {\rm GL}(C(X))$ and $Tf = \Phi(f)$ otherwise. Then there are $2^{\mathfrak{c}}$ such discontinuous maps $T$.\smallskip

Quite surprisingly from the point of view of automatic-continuity theory, compatibility isomorphisms from $C(X)$ for certain infinite compact spaces are automatically continuous even though this is not the case when one considers the simplest possible compact Hausdorff space, \emph{i.e.}, the one-point space. To state the results we employ the notion of a~component of a topological space. \smallskip

Let $X$ be a topological space and $x\in X$. The \emph{component of }$x$ is the union of all connected subspaces of $X$ which contain $x$; the component of $x$ is connected itself (we consider singletons to be connected subsets of $X$). Components of two distinct points in $X$ are either equal or disjoint so components of points $X$ form a decomposition of $X$ into pairwise disjoint sets. For brevity of notation we call a \emph{component of }$X$ the component of some point in $X$.

\begin{theorem} \label{T:ContPos}
Let $X$ and $Y$ be compact Hausdorff spaces and suppose that $X$ is sequentially compact and that every component of $X$ is nowhere dense. Then every compatibility isomorphism $T\colon C(X)\to C(Y)$ is norm-continuous.
\end{theorem}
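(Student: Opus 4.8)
The plan is to combine the structural information produced by the proof of Theorem~\ref{mainish} with the two special features of the hypotheses: the nowhere-density of components yields a plentiful supply of clopen sets, while sequential compactness supplies genuine convergent sequences. First I would record what the proof of Theorem~\ref{mainish} delivers: a homeomorphism $\sigma\colon X\to Y$ together with the support correspondence $\supp(Tf)=\sigma(\supp f)$ for every $f\in C(X)$ (so that cozero sets and zero sets are matched by $\sigma$), the normalisation $T0=0$, and the fact that $T$ carries the maximal elements of $\preceq$ — namely the nowhere-vanishing functions — onto the maximal elements of $C(Y)$. Since a compatibility isomorphism exists at all, $X$ and $Y$ are homeomorphic, so $Y$ is sequentially compact as well; this lets me later apply sequential compactness on the target side.

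The central tool will be a clopen localisation identity: for every clopen $K\subseteq X$ and every $f\in C(X)$,
$$T(f\cdot\mathbf 1_K)=(Tf)\cdot\mathbf 1_{\sigma(K)}.$$
This follows purely order-theoretically. Since $\supp(f\mathbf 1_K)=K\cap\supp f$ and $f\mathbf 1_K$ agrees with $f$ there, we have $f\mathbf 1_K\preceq f$, hence $T(f\mathbf 1_K)\preceq Tf$; thus $T(f\mathbf 1_K)$ equals $Tf$ on its support, which by the support correspondence is $\sigma(K\cap\supp f)$, and vanishes elsewhere. On the other side $(Tf)\mathbf 1_{\sigma(K)}$ equals $Tf$ on $\sigma(K\cap\supp f)$ and vanishes off it, because $\supp(Tf)=\sigma(\supp f)$ forces $Tf$ to vanish on $\sigma(K)\setminus\sigma(\supp f)$. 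Comparing the two sides pointwise gives the identity.

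Next I would use the component hypothesis to reduce norm-continuity to a local statement. Because every component of $X$ is nowhere dense, no point lies in the whole space as its component, so — using that in a compact Hausdorff space the component of a point is the intersection of all clopen sets containing it — each $x\in X$ has a \emph{proper} clopen neighbourhood. By compactness $X$ is covered by finitely many proper clopen sets $K_1,\dots,K_m$, whence $Y=\bigcup_i\sigma(K_i)$. Since $Tf$ agrees with $T(f\mathbf 1_{K_i})$ on $\sigma(K_i)$ by the localisation identity, for all $f,g$ one obtains $\|Tf-Tg\|\le\max_i\|T(f\mathbf 1_{K_i})-T(g\mathbf 1_{K_i})\|$; as $\|f_n\mathbf 1_{K_i}-f\mathbf 1_{K_i}\|\le\|f_n-f\|$, the continuity of $T$ on all of $C(X)$ follows once $T$ is known to be continuous on each class of functions supported in a fixed proper clopen set. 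Every such function vanishes on the nonempty clopen set $X\setminus K_i$, hence is a non-unit, so the whole problem is reduced to continuity of $T$ on functions vanishing on a fixed nonempty clopen set. I would emphasise that this clopen patching is exactly where the nowhere-density of components is used: for a connected $X$ the only proper clopen set is empty, which is precisely the obstruction behind the $2^{\mathfrak c}$ discontinuous automorphisms described in the introduction.

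The main obstacle is this remaining analytic core, and it is where sequential compactness must do the work; note that the clopen reduction alone cannot finish, since a clopen piece $K$ again satisfies the hypotheses and $T$ restricts to a compatibility isomorphism $C(K)\to C(\sigma(K))$, so iterating produces no genuine simplification. I would argue by contradiction: were $T$ discontinuous at some non-unit $f$ vanishing on a nonempty clopen set, there would be $f_n\to f$, an $\varepsilon>0$, and witness points $y_n\in Y$ with $|(Tf_n)(y_n)-(Tf)(y_n)|\ge\varepsilon$. Sequential compactness of $Y$ yields $y_{n_k}\to y^{\ast}$; set $x^{\ast}=\sigma^{-1}(y^{\ast})$. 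Since a nonzero witness forces $y_n\in\supp(Tf_n)=\sigma(\supp f_n)$, the preimages accumulate at $x^{\ast}$, and I would separate $x^{\ast}$ from a tail of these preimages by a proper clopen set, apply the localisation identity, and play the surviving discrepancy against the uniform convergence $f_n\to f$. The delicate point is to extract from the order structure enough control on the \emph{values} of $Tf_n$ along the witness sequence to close this gap; sequential compactness is indispensable here precisely because it upgrades the a~priori merely topological accumulation of witnesses into an honest convergent sequence along which the localisation identity can be brought to bear.
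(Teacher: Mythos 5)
Your preparatory work is sound and in fact closely parallels the paper's own machinery: the support correspondence $\supp(Tf)=\sigma(\supp f)$ is a correct (if unproved) extraction from the proof of Theorem~\ref{mainish}, and your clopen localisation identity $T(f\cdot\mathds{1}_K)=(Tf)\cdot\mathds{1}_{\sigma(K)}$ is essentially equivalent to Propositions~\ref{P:ClopenToClopen} and~\ref{P:EqualOnClopen} of the paper (clopen sets go to clopen sets respecting complements; functions agreeing on a clopen set have images agreeing on its image). But the argument stops exactly where the real work begins, and the one concrete step you sketch for the endgame cannot work. If the witnesses $y_{n_k}$ converge to $y^{\ast}$, then their preimages $\sigma^{-1}(y_{n_k})$ converge to $x^{\ast}=\sigma^{-1}(y^{\ast})$, so \emph{every} clopen set containing $x^{\ast}$ contains a tail of these preimages: it is topologically impossible to ``separate $x^{\ast}$ from a tail of these preimages by a proper clopen set.'' Moreover, the ``delicate point'' you name---extracting control on the values of $Tf_n$ at the witnesses from the order structure---is not a detail to be checked; it is the entire content of the theorem, and none of your reductions addresses it (as you yourself observe, the finite clopen patching leaves a problem of exactly the same kind, so it advances nothing).

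What is missing is the paper's gluing construction, which runs in the opposite direction from your sketch: one separates the witnesses \emph{from each other}, not from the limit, by infinitely many pairwise disjoint clopen sets $V_j\ni y_j$ while keeping $y_j\to z$. This is where nowhere-density of components genuinely enters: the quotient $Y/E$ by the decomposition into components is compact, zero-dimensional and---by nowhere density---perfect, which allows one to perturb the witnesses so that they lie in pairwise distinct components (a priori they could all lie in the component of the limit, where no clopen separation exists) while preserving the inequality $|Tf_{n_j}(y_j)-Tf(y_j)|>\eta$, and then to choose the disjoint clopen sets $V_j$. One then builds a \emph{single} function $\tilde f\in C(X)$ equal to $f_{n_{2j}}$ on $\tau^{-1}(V_{2j})$ and to $f$ elsewhere; it is continuous precisely because $f_n\to f$ uniformly. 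Your localisation identity (or Proposition~\ref{P:EqualOnClopen}) gives $T\tilde f(y_{2j})=Tf_{n_{2j}}(y_{2j})$ and $T\tilde f(y_{2j-1})=Tf(y_{2j-1})$. Since both interleaved sequences converge to $z$, the odd-indexed values of $T\tilde f$ tend to $Tf(z)$ while the even-indexed ones stay $\eta$-far from it: this contradicts the continuity of the \emph{function} $T\tilde f\in C(Y)$, not of the operator $T$, and that is the trick your proposal lacks. (Two smaller slips: the maximal elements of $\preceq$ are the functions with full support, not the nowhere-vanishing ones; and a witness $|Tf_n(y_n)-Tf(y_n)|\geqslant\varepsilon$ only forces $y_n\in\supp(Tf_n)\cup\supp(Tf)$, not $y_n\in\supp(Tf_n)$.)
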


We point out that the prototypical example of a compact space $X$ which meets the hypotheses of the above theorem is the Cantor set $\Delta$, however there are numerous further examples that are not zero-dimensional too, just to mention spaces of the form $\Delta\times Z$, where $Z$ is any connected, compact metric space. \smallskip


Theorem~\ref{T:ContPos} is in a sense optimal as there always exists a discontinuous compatibility isomorphism whenever the space a has a component with non-empty interior, as stated in the following theorem.

\begin{theorem} \label{T:Discont}
Let $X$ be a compact Hausdorff space which has at least one component with a non-empty interior. Then there exists a compatibility isomorphism $T\colon C(X)\to C(X)$ which is not continuous as a map on $C(X)$ either with the norm topology or the topology of pointwise convergence.
\end{theorem}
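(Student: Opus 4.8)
The plan is to build $T$ by transporting the automorphisms available on a connected space to $C(X)$ through a decomposition of $X$ along the closed component $K$. Write $F=\overline{X\setminus K}$. Then $K$ and $F$ are closed, $K\cup F=X$, and $K\cap F=\partial K:=K\setminus\interior K$. By the pasting lemma for closed sets, restriction identifies $C(X)$ with the fibre product
$$\{(u,v)\in C(K)\times C(F): u|_{\partial K}=v|_{\partial K}\},$$
via $f\mapsto(f|_K,f|_F)$, the inverse gluing a compatible pair into a continuous function on $X$. Here I use that $K$, being a component of a compact space, is itself compact, connected, and Hausdorff.

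The first real step is a structural lemma describing $\preceq$ through this picture: for $f,f'\in C(X)$ one has $f\preceq f'$ if and only if $f|_K\preceq f'|_K$ in $C(K)$ and $f|_F\preceq f'|_F$ in $C(F)$. The point is the support identity $\supp f=\supp(f|_K)\cup\supp(f|_F)$, which holds because $K$ and $F$ are closed, so that for a subset of $K$ (resp.\ $F$) its closures in $K$ (resp.\ $F$) and in $X$ coincide. The defining condition ``$f=f'$ on $\supp f$'' then splits into the two componentwise conditions, so $\preceq$ is the product ordering across the closed cover $\{K,F\}$.

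Next I would produce the automorphism in the $K$-coordinate. Since $K$ is connected and compact Hausdorff, the construction recalled in the introduction shows that every self-bijection of ${\rm GL}(C(K))$ extends, by the identity on the remaining functions, to a compatibility automorphism of $C(K)$. I would choose such a bijection $\Phi$ that in addition preserves restriction to $\partial K$, i.e.\ maps each fibre $\{g\in{\rm GL}(C(K)):g|_{\partial K}=h\}$ to itself; this costs nothing for the compatibility-automorphism property but ensures that $\Phi$ fixes boundary values. Define $Tf$ to be the function equal to $\Phi(f|_K)$ on $K$ and to $f|_F$ on $F$. Well-definedness is precisely where boundary-preservation is used, as then $\Phi(f|_K)$ and $f|_F$ still agree on $\partial K$ and paste to an element of $C(X)$; bijectivity follows by inverting with $\Phi^{-1}$, and the structural lemma makes $T$ order-preserving in both directions, because $\Phi$ is an order-automorphism of $C(K)$ while the $F$-coordinate is left untouched. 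Thus $T$ is a compatibility isomorphism, and one checks that the whole scheme degenerates correctly to the introduction's construction when $X$ is connected (so $F=\emptyset$).

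Finally, discontinuity is where $\interior K\neq\emptyset$ enters decisively. By Urysohn's lemma I would pick $w\in C(K)$ with $w|_{\partial K}=0$ and $w\neq0$ at some point of $\interior K$; such $w$ exists exactly because the interior is nonempty (indeed, $\interior K=\emptyset$ would force $\partial K=K$ and collapse every fibre to a single point, making $\Phi$ the identity). Setting $u_n=1+\tfrac1n w$, for large $n$ these lie in ${\rm GL}(C(K))$, share the boundary restriction $u_n|_{\partial K}=1$, and converge to $1$; as they sit in one fibre, $\Phi$ may be chosen to send them to a sequence not converging to $\Phi(1)$, neither in norm nor at the chosen interior point. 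Pasting with a fixed $v$ agreeing with $1$ on $\partial K$ yields $f_n\to f_0$ in $C(X)$ with $Tf_n\not\to Tf_0$, defeating both the norm and the pointwise topology. The main obstacle I expect is the structural lemma, and specifically its behaviour on the shared boundary $\partial K=K\cap F$: one must verify carefully that the support computation and the splitting of ``$f=f'$ on $\supp f$'' remain valid at points where the two closed pieces meet. Once that is secured, the remaining work is only to confirm that a single boundary-fibre of ${\rm GL}(C(K))$—which is infinite precisely when $\interior K\neq\emptyset$—affords enough freedom to break continuity while leaving the compatibility ordering intact.
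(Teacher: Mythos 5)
Your proposal is correct and takes essentially the same route as the paper's proof: you alter only the functions that are non-vanishing on the distinguished component, via a bijection that preserves boundary values (exactly the paper's condition \eqref{E:boundary}), and the connectedness dichotomy you rely on—both for the structural lemma and for the claim that a fibre-preserving permutation of ${\rm GL}(C(K))$ extended by the identity is a compatibility automorphism—is precisely what the paper isolates as Lemma~\ref{L:connprec}. Your fibre-product splitting of the ordering along the closed cover $\{K,\overline{X\setminus K}\}$ is a sound repackaging of the paper's direct case analysis, and your witness $u_n=\mathds{1}+\tfrac1n w$ plays the same role as the paper's swap of $\tilde f_1=\mathds{1}$ with $\tilde f_2=\mathds{1}+\tilde g$.
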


Uncountable chains with respect to the compatibility ordering have been investigated by the first-named author together with Hart and Kochanek (\cite{hkk}) and together with Smith (\cite{kasm}) in relation to weakly compact operators on spaces of continuous functions. Krupski then worked on and solved in his PhD thesis (\cite{krupski}) certain problems left open in \cite{hkk}. The authors are not aware of any other occurrences (neither explicit nor implicit) of the compatibility ordering in the literature. Having thus entered a virgin territory, we find ourselves obliged to uncover some basic properties of the compatibility ordering.\bigskip

\noindent \textbf{Acknowledgments}. We are indebted to Juan Francisco Camasca Fern\'{a}ndez (S\~ao Paulo) and Luiz Gustavo Cordeiro (Ottawa) for attentive reading of the preliminary version of the manuscript and for detecting certain slips that were lurking in the draft. We also wish to express our deepest gratitude to Luiz Gustavo Cordeiro who managed to discover a~substantial gap in the proof of the main theorem. He also provided us with detailed notes on a possible fix, convincing us to return to our original idea of employing lattice theory. In addition to that, he gave us his kind permission to use and publish some of his elegant ideas present in the current version of the first proof of Theorem~\ref{mainish}.

\section{Consequences of Theorem~\ref{mainish}}\label{consequences}

Before we list the promised consequences of Theorem~\ref{mainish}, let us make some preliminary observations. First of all, we note that compatibility isomorphisms map the zero function to the zero function. Secondly, we observe that if $f,g\in C(X)$ for some topological space $X$, then $f\preceq g$ implies that $fg=f^2$. The converse also holds so we have got the following proposition. 

\begin{proposition}\label{mult}Let $X$ be a topological space and suppose that $f,g\in C(X)$. Then $f\preceq g$ if and only if $fg=f^2$.\end{proposition}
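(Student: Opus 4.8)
The plan is to verify both implications by reducing to a pointwise identity and then invoking continuity only where it is genuinely needed. For the forward implication, suppose $f\preceq g$ and fix $x\in X$. Since $fg=f^2$ is to be checked pointwise, I would split into two cases according to whether $f(x)$ vanishes. If $f(x)=0$, then both $f(x)g(x)$ and $f(x)^2$ are zero; if $f(x)\neq 0$, then $x$ lies in $\{y\colon f(y)\neq 0\}\subseteq \supp f$, whence $f(x)=g(x)$ by the definition of $\preceq$, and therefore $f(x)g(x)=f(x)^2$. Thus $fg=f^2$ holds identically on $X$. Notably, this direction uses nothing about continuity, only the defining property of the ordering.

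For the converse, assume $fg=f^2$. On the set $U=\{x\colon f(x)\neq 0\}$ the scalar $f(x)$ is invertible, so dividing the identity $f(x)g(x)=f(x)^2$ by $f(x)$ gives $g(x)=f(x)$; hence $f$ and $g$ agree on $U$. The goal $f\preceq g$ asks for agreement on all of $\supp f=\overline{U}$, so what remains is to pass from $U$ to its closure. Here continuity enters: the set $\{x\colon f(x)=g(x)\}$ is the zero set of the continuous function $f-g$ and is therefore closed; since it contains $U$, it contains $\overline{U}=\supp f$. This yields $f=g$ on $\supp f$, that is, $f\preceq g$.

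The only genuinely non-formal point is this last closure step in the converse---everything else is a routine case analysis---and it is precisely where continuity of $f$ and $g$ is indispensable (without it, equality on $U$ need not propagate to $\overline{U}$). I would phrase it via the zero set of $f-g$ so that the argument works uniformly for both the real and the complex scalar field, no ordering or sign considerations being required.
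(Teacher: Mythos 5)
Your proof is correct and follows essentially the same route as the paper: both establish agreement of $f$ and $g$ on the set $U=\{x\colon f(x)\neq 0\}$ and then use continuity of $f$ and $g$ to propagate this to $\supp f=\overline{U}$. The only difference is cosmetic --- where the paper passes to the closure by taking a net in $U$ converging to a given point of $\supp f$, you invoke the closedness of the equalizer $\{x\colon f(x)=g(x)\}$ as the zero set of $f-g$, which is a slightly tidier phrasing of the same continuity step.
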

\begin{proof}We only need to show $(\Leftarrow)$. Suppose that $fg=f^2$. Pick $x\in \supp f$. Then there exists a net $(x_\alpha)_{\alpha\in \Lambda}$ in $X$ which converges to $x$ and such that $f(x_\alpha)\neq 0$ for all $\alpha\in \Lambda$. As $fg=f^2$, for each $\alpha\in \Lambda$ we have $f(x_\alpha)g(x_\alpha)= f(x_\alpha)f(x_\alpha)$ so $f(x_\alpha) = g(x_\alpha)$. By continuity of $f$ and $g$, we conclude that $f(x)=g(x)$.\end{proof}
Armed with this simple but useful algebraic description of the compatibility ordering, we are now ready to list some consequences of Theorem~\ref{mainish}.\bigskip

The theorems of Gelfand--Kolmogorov (\cite{gk}) and Milgram (\cite{milgram}) are immediate consequences of  Theorem~\ref{mainish} combined with Proposition~\ref{mult} because every bijection between $C(X)$ and $C(Y)$ which preserves multiplication (for instance, a ring isomorphism) necessarily preserves the compatibility ordering and the inverse of a bijection which preserves multiplication preserves multiplication too. Let us record these consequences formally.

\begin{corollary}[The Gelfand--Kolmogorov theorem]Let $X$ and $Y$ be compact Hausdorff spaces. Then $X$ and $Y$ are homeomorphic if and only if $C(X)$ and $C(Y)$ are isomorphic as rings. \end{corollary}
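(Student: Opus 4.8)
The plan is to establish both implications, with the forward direction being entirely classical and the reverse direction carrying the content that draws on our main theorem. For the forward implication, suppose $\varphi\colon X\to Y$ is a homeomorphism. Then the composition map $f\mapsto f\circ\varphi$ is a bijection from $C(Y)$ onto $C(X)$ which respects both pointwise addition and pointwise multiplication, hence is a ring isomorphism; this step requires no appeal to Theorem~\ref{mainish} whatsoever.

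For the reverse implication, I would take a ring isomorphism $T\colon C(X)\to C(Y)$ and show directly that it is a compatibility isomorphism, after which Theorem~\ref{mainish} does the rest. The crucial ingredient is Proposition~\ref{mult}, which reformulates the compatibility ordering in purely multiplicative terms: $f\preceq g$ if and only if $fg=f^2$. First I would verify that $T$ is a compatibility morphism. If $f\preceq g$ in $C(X)$, then $fg=f^2$ by Proposition~\ref{mult}, and since $T$ preserves products we obtain $(Tf)(Tg)=T(fg)=T(f^2)=(Tf)^2$; applying Proposition~\ref{mult} once more yields $Tf\preceq Tg$, as required.

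It then remains to observe that $T^{-1}$ is also a compatibility morphism. This is immediate, because the inverse of a ring isomorphism is again a ring isomorphism and in particular preserves multiplication, so the identical argument applies verbatim to $T^{-1}$. Consequently $T$ is a compatibility isomorphism, and Theorem~\ref{mainish} furnishes a homeomorphism between $X$ and $Y$, completing the proof.

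There is essentially no obstacle to surmount here, as the entire difficulty has been front-loaded into Theorem~\ref{mainish} and the algebraic characterisation in Proposition~\ref{mult}. The only point worth flagging is that the argument never uses additivity of $T$: the compatibility ordering is detected by the multiplicative structure alone. This is precisely the feature that makes the same reasoning applicable to Milgram's theorem, where one is handed a merely multiplicative bijection rather than a full ring isomorphism.
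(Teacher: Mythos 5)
Your proposal is correct and takes essentially the same approach as the paper: both reduce the corollary to Theorem~\ref{mainish} by using Proposition~\ref{mult} to show that any multiplication-preserving bijection (hence any ring isomorphism), together with its inverse, preserves the compatibility ordering $\preceq$. The paper even makes the same observation you flag at the end, namely that additivity is never used, which is why the identical argument yields Milgram's theorem.
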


\begin{corollary}[Milgram's theorem]Let $X$ and $Y$ be compact Hausdorff spaces. Then $X$ and $Y$ are homeomorphic if and only if there exists a multiplicative bijection between $C(X)$ and $C(Y)$. \end{corollary}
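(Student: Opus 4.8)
The plan is to derive this entirely from Theorem~\ref{mainish} and Proposition~\ref{mult}, showing that a multiplicative bijection is automatically a compatibility isomorphism. The forward implication is immediate: if $\varphi\colon Y\to X$ is a homeomorphism, then the composition operator $f\mapsto f\circ\varphi$ is a ring isomorphism $C(X)\to C(Y)$, hence in particular a multiplicative bijection.

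For the converse, suppose $T\colon C(X)\to C(Y)$ is a multiplicative bijection. First I would observe that $T$ preserves the compatibility ordering: given $f\preceq g$ in $C(X)$, Proposition~\ref{mult} yields $fg=f^2$, whence by multiplicativity $(Tf)(Tg)=T(fg)=T(f^2)=(Tf)^2$, and applying Proposition~\ref{mult} in the reverse direction gives $Tf\preceq Tg$. Thus $T$ is a compatibility morphism.

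Next I would check that the inverse behaves the same way. Since $T$ is a bijection, every pair $u,v\in C(Y)$ can be written as $u=Tf$ and $v=Tg$, and then $T^{-1}(uv)=T^{-1}\big((Tf)(Tg)\big)=T^{-1}\big(T(fg)\big)=fg=T^{-1}(u)\,T^{-1}(v)$; so $T^{-1}$ is again a multiplicative bijection, and the previous paragraph shows it too preserves $\preceq$. Hence $T$ is a compatibility isomorphism, and Theorem~\ref{mainish} delivers a homeomorphism between $X$ and $Y$.

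Since every step is a direct application of the two cited results, there is no genuine obstacle here; the entire weight of the corollary rests on Theorem~\ref{mainish}. The only point demanding even minor attention is that multiplicativity of $T^{-1}$ is not assumed but must be deduced, and this is precisely where the surjectivity of $T$ is used.
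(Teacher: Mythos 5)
Your proof is correct and follows essentially the same route as the paper: by Proposition~\ref{mult} a multiplicative bijection preserves $\preceq$, the inverse of a multiplicative bijection is again multiplicative (hence also preserves $\preceq$), and Theorem~\ref{mainish} finishes the argument. The paper states this in one sentence; you have merely written out the same details, including the observation that multiplicativity of $T^{-1}$ must be deduced from bijectivity.
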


We specialise now to the case of real scalars. We define for a function $f\colon X\to \mathbb{R}$, the positive and the negative part of $f$, by setting $f^+= \max\{f, 0\}$ and $f^-=-\min\{f, 0\}$, respectively, pointwise. Of course, if $X$ is a topological space and $f$ is continuous, then so are $f^+, f^-$. Moreover, the pointwise ordering on $C(X)$ makes it a lattice with lattice operations $\min\{f,g\}$ and $\max\{f,g\}$ ($f,g\in C(X)$) defined pointwise. \smallskip

Let us then record the following simple fact which links the usual pointwise ordering with the compatibility ordering.
\begin{lemma}\label{plusminus} Let $X$ be a topological space and suppose that $f,g\in C(X)$. Then 
\begin{romanenumerate}
\item\label{easy1} $f\preceq g$ if and only if $f^+\preceq g^+$ and $f^-\preceq g^-$,
\item\label{easy2} if $f,g\geqslant 0$, then $f\preceq g$ if and only if $f\leqslant g$ and $\max\{g-f, f\}\geqslant g$,
\item\label{easy3} if $f,g\leqslant 0$, then $f\preceq g$ if and only if $f\geqslant g$ and $\min\{g-f, f\}\leqslant g$.
\end{romanenumerate}\end{lemma}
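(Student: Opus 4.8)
My plan is to lean entirely on Proposition~\ref{mult}, which replaces the relation $h\preceq k$ by the genuinely pointwise identity $hk=h^2$. The advantage is that every occurrence of $\supp$ and the closures hidden inside it disappears, so that each claimed equivalence reduces to an equality or inequality that can be checked separately at each point $x\in X$, splitting into cases according to the sign of $f(x)$.

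For part~\romanref{easy1} I would first record the pointwise identities $f=f^+-f^-$, $g=g^+-g^-$ and $f^+f^-=0=g^+g^-$, whence $f^2=(f^+)^2+(f^-)^2$ and $fg=f^+g^+ +f^-g^- -f^+g^- -f^-g^+$. By Proposition~\ref{mult} the relations $f\preceq g$, $f^+\preceq g^+$ and $f^-\preceq g^-$ are equivalent to $fg=f^2$, $f^+g^+=(f^+)^2$ and $f^-g^-=(f^-)^2$ respectively, so it suffices to prove the pointwise equivalence
$$f(x)g(x)=f(x)^2\iff \bigl(f^+(x)g^+(x)=(f^+(x))^2\text{ and }f^-(x)g^-(x)=(f^-(x))^2\bigr).$$
Fixing $x$ and distinguishing the cases $f(x)>0$, $f(x)<0$ and $f(x)=0$ makes both implications immediate: when $f(x)>0$ one has $f^-(x)=0$ and the first factor forces $g(x)=f(x)$, symmetrically for $f(x)<0$, while $f(x)=0$ annihilates every term. (For the direction $(\Leftarrow)$ one may also simply note that the cross terms $f^+g^-$ and $f^-g^+$ vanish identically once $f^+g^+=(f^+)^2$ and $f^-g^-=(f^-)^2$ hold.)

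For part~\romanref{easy2}, assuming $f,g\geqslant 0$, I would use that $f\preceq g\iff fg=f^2\iff f(g-f)=0$, i.e.\ at each point $f(x)=0$ or $f(x)=g(x)$. It then remains to verify, for reals $a:=f(x)\geqslant 0$ and $b:=g(x)\geqslant 0$, the purely numerical equivalence
$$\bigl(a=0\text{ or }a=b\bigr)\iff\bigl(a\leqslant b\text{ and }\max\{b-a,a\}\geqslant b\bigr),$$
which is a one-line case analysis: $\max\{b-a,a\}\geqslant b$ holds exactly when $b-a\geqslant b$ (forcing $a=0$) or $a\geqslant b$ (which, under $a\leqslant b$, forces $a=b$). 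Quantifying over all $x\in X$ yields~\romanref{easy2}.

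Finally, part~\romanref{easy3} should follow from~\romanref{easy2} by a sign change. Since $(-f)(-g)=fg$ and $(-f)^2=f^2$, Proposition~\ref{mult} gives $f\preceq g\iff -f\preceq -g$; applying~\romanref{easy2} to the non-negative functions $-f,-g$ and rewriting $-f\leqslant -g$ as $f\geqslant g$ and $\max\{(-g)-(-f),-f\}\geqslant -g$ as $\min\{g-f,f\}\leqslant g$ produces exactly the stated condition. I do not anticipate a real obstacle here: the only delicate point is the boundary case $f(x)=0$ in~\romanref{easy1}, and the multiplicative reformulation of Proposition~\ref{mult} is precisely what removes the need to argue about closures, which is where a direct support-based proof would otherwise get bogged down.
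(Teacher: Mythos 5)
Your proof is correct, and it takes a genuinely different route from the one in the paper. The paper argues directly from the definition of $\preceq$: part (i) is dismissed as immediate, part (ii) is proved by evaluating at points of $\supp f$ --- the converse direction uses the same numerical observation as yours ($f(x)>0$ together with $f\leqslant g$ and $\max\{g-f,f\}\geqslant g$ forces $f(x)=g(x)$) but applied only on the set $\{f>0\}$, followed by a continuity step to propagate the equality to its closure $\supp f$ --- and part (iii) is declared completely analogous. You instead funnel everything through Proposition~\ref{mult}: since $f\preceq g$ is equivalent to the pointwise identity $fg=f^2$, and the conditions $f\leqslant g$ and $\max\{g-f,f\}\geqslant g$ are likewise pointwise, each part collapses to quantifier-free scalar arithmetic checked at a single point. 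Your case analyses are all sound: the cross-term remark in (i) does give the backward implication, the equivalence $(a=0$ or $a=b)\iff(a\leqslant b$ and $\max\{b-a,a\}\geqslant b)$ for $a,b\geqslant 0$ settles (ii), and deriving (iii) from (ii) via $(f,g)\mapsto(-f,-g)$ --- justified by $(-f)(-g)=fg$ and $(-f)^2=f^2$, with $\max\{f-g,-f\}=-\min\{g-f,f\}$ --- is a tidier reduction than the paper's ``analogous'' remark. The appeal to Proposition~\ref{mult} is legitimate and non-circular, since that proposition is established before the lemma and independently of it. What each approach buys: the paper's argument is self-contained at the level of the definition of $\preceq$ but must handle boundary points of the support by a continuity argument; yours concentrates all the topology in the already-proved Proposition~\ref{mult}, leaving only pointwise real-number computations, at the mild cost of invoking that proposition three times in part (i).
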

\begin{proof}\eqref{easy1} is immediate. For \eqref{easy2}, suppose that $f,g$ are non-negative. If $f\preceq g$, then as $f$ and $g$ are non-negative we must have $f\leqslant g$. If $x\in\supp f$, we have $f(x)=g(x)$ so $\max\{g(x)-f(x), g(x)\}=g(x)$. Otherwise, $\max\{g(x)-0, 0\}=g(x)$, hence we are done.\smallskip

Conversely, suppose that $f\leqslant g$ and $\max\{g-f, f\}\geqslant g$. Let us pick $x\in\supp f$. We have $f(x) \leqslant g(x)$. If $f(x)>0$, then $g(x)-f(x) < g(x)$. As $\max\{g(x)-f(x), f(x)\}\geqslant g(x)$ we must have $f(x)\geqslant g(x)$, so $f(x)=g(x)$. By continuity, $f(x)=g(x)$ for $x\in \supp f$.\smallskip

The proof of \eqref{easy3} is completely analogous.\end{proof}

We are now ready to show that Theorem~\ref{mainish} implies Kaplansky's theorem which recovers a compact Hausdorff space from the pointwise ordering on the family of all real-valued continuous functions on it. To this end, it is enough to show that lattice isomorphisms between $C(X)$ and $C(Y)$ (where $X$ and $Y$ are compact Hausdorff spaces) are translations of compatibility morphisms by the value at 0. Since $C(X)$ is a lattice under the pointwise ordering, a bijective order homomorphism $T\colon C(X)\to C(Y)$ is a lattice isomorphism, \emph{i.e.}, it preserves the lattice operations:
$$T(\max\{f,g\}) = \max\{Tf, Tg\}\text{ and }T(\min\{f,g\}) = \min\{Tf, Tg\}\quad (f,g\in C(X))$$
and the inverse of $T$ also has this property (see Lemma~\ref{latticesaresuperb}).

\begin{proposition}Let $X$ and $Y$ be compact Hausdorff spaces and let $S\colon C(X)\to C(Y)$ be a lattice isomorphism. Then $Tf=Sf - S0$ $(f\in C(X))$ is a lattice isomorphism between $C(X)$ and $C(Y)$ which is also a compatibility isomorphism.\end{proposition}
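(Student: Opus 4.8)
The plan is first to reduce the statement to a single clean assertion: \emph{a lattice isomorphism $T\colon C(X)\to C(Y)$ with $T0=0$ is automatically a compatibility isomorphism}. Indeed, for a fixed $h\in C(Y)$ the translation $u\mapsto u-h$ is a lattice automorphism of $C(Y)$, since it is an order-preserving bijection commuting with the pointwise operations $\max$ and $\min$. Hence $T=(\,\cdot\,-S0)\circ S$ is a composition of lattice isomorphisms and is therefore itself a lattice isomorphism; moreover $T0=S0-S0=0$. Thus everything comes down to showing that $T$ and $T^{-1}$ preserve $\preceq$, where both are order isomorphisms commuting with $\max,\min$ and fixing the zero function.

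The main obstacle is that the description of $\preceq$ furnished by Lemma~\ref{plusminus} still involves the \emph{difference} $g-f$, and a lattice isomorphism need not respect subtraction. The heart of the argument is therefore to recast $\preceq$ purely in terms of $\max$, $\min$, the order, and the single distinguished element $0$. First I would rewrite Lemma~\ref{plusminus}\,\eqref{easy1} as
\[
f\preceq g\quad\Longleftrightarrow\quad (f\vee 0)\preceq(g\vee 0)\ \text{ and }\ (f\wedge 0)\preceq(g\wedge 0),
\]
using that $f^{+}=f\vee 0$, that $f^{-}=-(f\wedge 0)$, and that negation preserves $\preceq$ (so $f^{-}\preceq g^{-}$ is the same as $(f\wedge 0)\preceq(g\wedge 0)$). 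Next comes the key lemma: for $a,b\geqslant 0$ one has $a\preceq b$ if and only if there exists $h\in C(Y)$ with $h\geqslant 0$, $a\wedge h=0$ and $a\vee h=b$; that is, $a$ has a complement in the lattice interval below $b$. For the forward direction take $h=b-a$ and check pointwise that $a\wedge(b-a)=0$ and $a\vee(b-a)=b$; for the converse, at each point the relation $\min\{a(x),h(x)\}=0$ forces $a(x)=0$ or $h(x)=0$, and $\max\{a(x),h(x)\}=b(x)$ then gives $a(x)\in\{0,b(x)\}$, i.e. $a\preceq b$. The order-reversed statement yields the analogous criterion for $a,b\leqslant 0$.

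With this subtraction-free description in hand, the conclusion is immediate. Since $T$ is an order isomorphism commuting with $\max$ and $\min$ and $T0=0$, we have $T(f\vee 0)=Tf\vee 0$ and $T(f\wedge 0)=Tf\wedge 0$, and $T$ maps each of the conditions ``$h\geqslant 0$'', ``$a\wedge h=0$'', ``$a\vee h=b$'' to the corresponding condition for the images; the same holds for $T^{-1}$. Hence both the two-part criterion above and the complement criterion are preserved by $T$ in both directions, so $f\preceq g$ if and only if $Tf\preceq Tg$. As $T$ is bijective, it is a compatibility isomorphism, which together with the first paragraph completes the proof. The one genuinely creative step is the passage from the difference-based formula of Lemma~\ref{plusminus} to the existence of a lattice complement; everything else is a routine transport of lattice-theoretic conditions along $T$.
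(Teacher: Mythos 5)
Your proof is correct, and while it shares the paper's skeleton, the endgame is organised differently in a way worth noting. Both arguments reduce via Lemma~\ref{plusminus}\,\romanref{easy1} to functions of one sign and both exploit the same witness $h=g-f$, transported through a lattice isomorphism fixing $0$ (Lemma~\ref{latticesaresuperb} supplies the preservation of $\max$ and $\min$ in each case). The paper, however, transports the four \emph{inequalities} $0\leqslant g-f\leqslant g$, $f\leqslant g$, $\max\{g-f,f\}\geqslant g$, $\min\{g-f,f\}\leqslant 0$, deduces $\max\{T(g-f),Tf\}=Tg$ and $\min\{T(g-f),Tf\}=0$, and then finishes through orthogonality: $T(g-f)\cdot Tf=0$ gives $Tg=T(g-f)+Tf$, and continuity of $T(g-f)$ forces it to vanish on $\supp Tf$, whence $Tf\preceq Tg$. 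You instead isolate a self-contained \emph{equivalence}: for nonnegative $a,b$, $a\preceq b$ iff $a$ admits a nonnegative lattice complement below $b$ ($a\wedge h=0$, $a\vee h=b$). Since this characterisation is stated purely in terms of $\vee$, $\wedge$, the order and $0$, its preservation under $T$ and $T^{-1}$ is immediate and symmetric, so you never need the orthogonality detour nor the separate appeal to $T^{-1}$ being a lattice homomorphism. Two small remarks. First, your converse step (``$a(x)\in\{0,b(x)\}$ for all $x$ implies $a\preceq b$'') is exactly the condition $ab=a^2$, so it is cleanest to close it by citing Proposition~\ref{mult} (or by repeating its net argument); the pointwise dichotomy alone is not literally the definition of $\preceq$, which involves the closure of $\{a\neq 0\}$. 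Second, your explicit order-reversed treatment of the case $a,b\leqslant 0$ is in fact \emph{more} careful than the paper, whose reduction ``it is enough to consider $f,g\geqslant 0$'' glosses over the fact that $T$ need not commute with negation, so that $(Tf)^-\preceq(Tg)^-$ requires the dual statement for $f\wedge 0\preceq g\wedge 0$ rather than the nonnegative case alone---precisely what you supply. (One typo: in your key lemma the complement $h$ should range over the same function space as $a$ and $b$, not over $C(Y)$.)
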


\begin{proof}Certainly, $T$ is a lattice isomorphism as
$$ Sf - S0 \leqslant Sg - S0 \iff Sf \leqslant Sg \iff f\leqslant g\quad \big(f,g\in C(X)\big).$$

Since $T^{-1}$ is also a (bijective) lattice homomorphism, it is enough to show that for $f,g\in C(X)$ with $f \preceq g$ one has $Tf\preceq Tg$. By Lemma~\ref{plusminus}\eqref{easy1}, it is enough to consider the case where $f,g$ are both non-negative (then so are $Tf$ and $Tg$). \smallskip

So suppose that $f,g\geqslant 0$ and $f\preceq g$. Then we clearly have that
\begin{equation*}
0\leqslant g-f\leqslant g,\; f\leqslant g,\;
\max\{g-f, f\}\geqslant g \text{ and } \min\{g-f, f\}\leqslant 0. 
\end{equation*}
Consequently, since $T$ is a pointwise-ordering isomorphism with $T(0)=0$, it respects these inequalities, which means that 
\begin{gather*}\label{simpleinequalities}
0\leqslant T(g-f)\leqslant Tg,\; 0\leqslant Tf\leqslant Tg;\\ \max\{T(g-f), Tf\}\geqslant Tg \text{ and } \min\{T(g-f), Tf\} \leqslant 0.
\end{gather*}
\smallskip
We obtain that $\max\{T(g-f),Tf\}=Tg$ and it also follows that the functions $T(g-f)$ and $Tf$ are orthogonal (\emph{i.e.}, $T(g-f)\cdot Tf =0$) as their minimum is equal to the zero function. Hence, $Tg=\max\{T(g-f),Tf\}=T(g-f)+Tf$, and one can readily see that this means $Tf\preceq Tg$. Indeed, the orthogonality of $T(g-f)$ and $Tf$ together with the continuity of $T(g-f)$ imply that $T(g-f)$ vanishes in the support of $Tf$.
\end{proof}

Consequently, we obtain Kaplansky's theorem.

\begin{corollary}[Kaplansky's theorem]\label{kaplan}Let $X$ and $Y$ be compact Hausdorff spaces. Then $X$ and $Y$ are homeomorphic if and only if there exists bijection between $C(X)$ and $C(Y)$ which preserves the pointwise ordering. \end{corollary}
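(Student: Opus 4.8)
The plan is to obtain this corollary almost for free by feeding the previous proposition into Theorem~\ref{mainish}; the substance of Kaplansky's theorem has already been absorbed into those two statements, and only a short packaging argument remains. The forward implication is the routine one: if $\varphi\colon X\to Y$ is a homeomorphism, then the composition operator $S\colon C(Y)\to C(X)$ given by $Sf=f\circ\varphi$ is a bijection, and for $f,g\in C(Y)$ we have $f\leqslant g$ pointwise on $Y$ exactly when $f\circ\varphi\leqslant g\circ\varphi$ pointwise on $X$; hence $S$ preserves the pointwise ordering and provides the desired bijection.

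For the converse --- the only direction carrying content --- I would begin with a bijection $S\colon C(X)\to C(Y)$ preserving the pointwise ordering. As $C(X)$ and $C(Y)$ are lattices under this ordering, Lemma~\ref{latticesaresuperb} upgrades $S$ to a genuine lattice isomorphism, that is, a bijection preserving $\max$ and $\min$ whose inverse is again a lattice homomorphism. The previous proposition then applies directly: the translate $Tf=Sf-S0$ $(f\in C(X))$ is simultaneously a lattice isomorphism and a compatibility isomorphism between $C(X)$ and $C(Y)$. Theorem~\ref{mainish} now converts the existence of the compatibility isomorphism $T$ into a homeomorphism between $X$ and $Y$, completing the proof.

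I do not expect a serious obstacle, since the hard analysis is quarantined inside Theorem~\ref{mainish}. The only points requiring care are conceptual rather than computational: first, that a mere order-preserving bijection between the lattices $C(X)$ and $C(Y)$ is automatically a lattice isomorphism with order-preserving inverse, which is precisely what Lemma~\ref{latticesaresuperb} supplies; and second, the necessity of the normalisation $Sf-S0$. The latter reflects the fact that every compatibility morphism must send the zero function, being the least element of $\preceq$, to the zero function, whereas an order isomorphism need only carry $0$ to some constant; subtracting $S0$ reconciles the two notions, and this is exactly the manoeuvre that the preceding proposition was built to justify.
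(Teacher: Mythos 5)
Your proposal is correct and follows exactly the route the paper intends: the forward direction via the composition operator, and the converse by invoking Lemma~\ref{latticesaresuperb} to upgrade the order-preserving bijection to a lattice isomorphism, applying the preceding proposition to normalise it to the compatibility isomorphism $Tf = Sf - S0$, and then citing Theorem~\ref{mainish}. Your closing remark on why the translation by $S0$ is needed (the zero function being the $\preceq$-least element must be fixed by any compatibility isomorphism) is precisely the point the paper's proposition is designed to handle.
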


A (possibly non-linear) map $T\colon C(X)\to C(Y)$ is called \emph{disjointness preserving} provided that $Tf \cdot Tg = 0$ whenever $f \cdot g=0$.  Lemma~\ref{additive} says that compatibility isomorphisms are disjointness preserving. Jarosz studied linear, disjointness preserving maps between spaces of continuous functions on compact Hausdorff spaces and proved that every such map is a weighted composition operator (\cite{jarosz}, see also \cite[Theorem 3.1]{aj}). He also obtained the following result, which now becomes another corollary to Theorem~\ref{mainish}.

\begin{corollary}[Jarosz's theorem]Let $X$ and $Y$ be compact Hausdorff spaces. Then $X$ and $Y$ are homeomorphic if and only if there exists a linear, disjointness preserving bijection between $C(X)$ and $C(Y)$.\end{corollary}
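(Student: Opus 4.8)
The plan is to reduce the nontrivial implication to Theorem~\ref{mainish} by showing that a linear, disjointness preserving bijection is automatically a compatibility isomorphism. The reverse implication is immediate: if $X$ and $Y$ are homeomorphic via $\varphi$, then $f\mapsto f\circ\varphi$ is a linear bijection between the two function spaces, and it is disjointness preserving since $(f\circ\varphi)(g\circ\varphi)=(fg)\circ\varphi$. Thus the work lies in proving that the existence of a linear, disjointness preserving bijection $T\colon C(X)\to C(Y)$ forces a homeomorphism, and for this it is enough to exhibit $T$ as a compatibility isomorphism.

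The crux is a reformulation of the compatibility ordering in terms of disjointness, read off directly from Proposition~\ref{mult}: for $f,g\in C(X)$ one has $f\preceq g$ precisely when $fg=f^2$, that is, precisely when $f\cdot(g-f)=0$, i.e. when $f$ and $g-f$ are disjoint. Granting this, the fact that $T$ is a compatibility morphism is essentially formal. Indeed, if $f\preceq g$ then $f\cdot(g-f)=0$, so disjointness preservation gives $Tf\cdot T(g-f)=0$, and linearity rewrites this as $Tf\cdot(Tg-Tf)=0$, which is exactly $Tf\preceq Tg$. The same one-line computation shows that \emph{every} linear, disjointness preserving map is a compatibility morphism, so no use of bijectivity has been made yet.

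It remains to check that $T^{-1}$ is also a compatibility morphism. Since $T^{-1}$ is linear, the previous paragraph reduces this to showing that $T^{-1}$ is disjointness preserving, equivalently that $T$ reflects disjointness: $Tf\cdot Tg=0$ should imply $fg=0$. This is the main obstacle, and it is the one place where surjectivity, not merely injectivity, is needed. I would approach it through the annihilator bands $\{f\}^{\perp}=\{h\in C(X)\colon hf=0\}$: disjointness preservation yields $T\bigl(\{f\}^{\perp}\bigr)\subseteq\{Tf\}^{\perp}$ for every $f$, and if one can promote these inclusions to equalities, then reflecting disjointness follows by a preimage chase, since $Tg\in\{Tf\}^{\perp}=T\bigl(\{f\}^{\perp}\bigr)$ together with injectivity forces $g\in\{f\}^{\perp}$. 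The genuine difficulty is that the missing inclusion is equivalent to the biseparating property itself; establishing it requires a localisation argument showing that $T$ neither splits nor merges the carriers of functions — the same phenomenon underlying Jarosz's weighted-composition structure theorem. Once $T$ is known to be biseparating, the computation of the second paragraph applies verbatim to $T^{-1}$, so $T$ is a compatibility isomorphism and Theorem~\ref{mainish} delivers the homeomorphism.
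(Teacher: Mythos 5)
Your first two paragraphs are correct and coincide with the paper's own argument: the reformulation $f\preceq g \iff f\cdot(g-f)=0$ (Proposition~\ref{mult}), combined with linearity and disjointness preservation, gives $Tf\cdot(Tg-Tf)=0$, i.e. $Tf\preceq Tg$ --- this is exactly the computation in the paper's proof. The genuine gap is your third paragraph. You correctly isolate the remaining claim, namely that $T^{-1}$ is again disjointness preserving (equivalently, that $T$ reflects disjointness), but you do not prove it. Your annihilator-band sketch reduces it to promoting the inclusion $T\bigl(\{f\}^{\perp}\bigr)\subseteq\{Tf\}^{\perp}$ to an equality, and you then concede that this promotion ``is equivalent to the biseparating property itself''; as written the argument goes in a circle, and no localisation argument is actually supplied. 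This is not a formality that can be waved through: for bijective disjointness preserving operators between general vector lattices the inverse need \emph{not} be disjointness preserving (known counterexamples exist in that generality), so any proof must exploit the specific structure of $C(X)$ for $X$ compact Hausdorff.

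The paper closes exactly this gap by citation: inverses of linear, disjointness preserving bijections between spaces of continuous functions are automatically linear and disjointness preserving, by results of Huijsmans--de Pagter \cite{hp} and Koldunov \cite[Theorem 3.6]{koldunov}. These results are proved independently of Jarosz's theorem, so no circularity arises; once they are invoked, your second-paragraph computation applied to $T^{-1}$ finishes the proof, precisely as in the paper. In short: your plan is the paper's plan, and your treatment of the forward half is fine, but the proof is incomplete at its one genuinely hard point. To repair it you should either quote the Huijsmans--de Pagter/Koldunov theorem or supply an actual proof that $T$ reflects disjointness; your proposal does neither.
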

\begin{proof}Let $T\colon C(X)\to C(Y)$ be a disjointness preserving linear bijection. Inverses of such maps are automatically linear and disjointness preserving (\cite{hp}, \cite[Theorem 3.6]{koldunov}), so in the light of Theorem~\ref{mainish}, it is enough to show that $T$ is a compatibility morphism. To this end, fix two functions $f,g\in C(X)$ such that $f\preceq g$. Then $(g-f)f=0$, so $T(g-f)$ and $Tf$ are orthogonal, \emph{i.e.}, $T(g-f)\cdot Tf=0$. Thus, by linearity, $Tg =T(g-f) +Tf \succeq Tf$. \end{proof}

\section{Auxiliary results and the proof of Theorem~\ref{mainish}}\label{mainproof}
We call two scalar-valued functions $f,g$ defined on the same space \emph{orthogonal} when $f\cdot g=0$. This, of course, does not necessarily mean that the supports of $f$ and $g$ are disjoint.

\begin{lemma}\label{additive}
Let $X$ and $Y$ be topological spaces. Suppose that $T\colon C(X)\to C(Y)$ is a~compatibility isomorphism and $f,g\in C(X)$. Consider the following conditions:
\begin{romanenumerate}
\item\label{d1} $f$ and $g$ are orthogonal,
\item\label{d2} $Tf$ and $Tg$ are orthogonal,
\item\label{d3} $T(f+g)=Tf + Tg  $.
\end{romanenumerate}
Then \eqref{d1} and \eqref{d2} are equivalent and imply \eqref{d3}.
\end{lemma}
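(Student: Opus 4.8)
The plan is to describe orthogonality of $f$ and $g$ purely in terms of the ordering $\preceq$ and its least element $0$, so that it can be transported across $T$ and its inverse. Concretely, I claim that for $f,g\in C(X)$ one has $f\cdot g=0$ if and only if the pair $\{f,g\}$ admits a common upper bound in $(C(X),\preceq)$ \emph{and} $0$ is the only common lower bound of $f$ and $g$. Both conditions refer solely to $\preceq$ and to $0$, and a compatibility isomorphism together with its inverse preserves $\preceq$ and sends $0$ to $0$; consequently each carries common upper bounds to common upper bounds and nonzero common lower bounds to nonzero common lower bounds. Thus, once the claim is proved, the two conditions hold for $f,g$ in $C(X)$ exactly when they hold for $Tf,Tg$ in $C(Y)$, which gives the equivalence of \eqref{d1} and \eqref{d2}.

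To establish the claim I would argue as follows. If $f\cdot g=0$, then $g$ vanishes on $\{f\neq0\}$, hence on $\supp f$, so $f+g$ agrees with $f$ on $\supp f$ and with $g$ on $\supp g$; therefore $f\preceq f+g$ and $g\preceq f+g$, exhibiting a common upper bound. Moreover any common lower bound $\ell$ satisfies $\ell=f$ and $\ell=g$ on $\supp\ell$, so $f=g$ there, while on $\{f=g\}$ we have $f^2=f\cdot g=0$; hence $f=0=\ell$ on $\supp\ell$ and $\ell=0$. For the converse, suppose $\{f,g\}$ has a common upper bound $h$ but, towards a contradiction, $f\cdot g\neq0$. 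From $f\preceq h$ and $g\preceq h$ we get $f=h=g$ on $\supp f\cap\supp g$; the set $V:=\{f\neq0\}\cap\{g\neq0\}$ is nonempty and contained in $\supp f\cap\supp g$, so $f=g$ on $V$ and, as $\{f=g\}$ is closed, on $\overline V$. Each point of $\partial V$ lies in $\overline V$ but not in $V$, so there $f=g$ while at least one of them vanishes, forcing $f=g=0$ on $\partial V$. Hence the function equal to $f$ on $\overline V$ and to $0$ off $V$ is well defined, continuous, nonzero (it equals $f$ on $V$), and a common lower bound of $f$ and $g$, contradicting the second condition.

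For the implication to \eqref{d3}, I would upgrade the first part of the previous paragraph: when $f\cdot g=0$ one has $\supp(f+g)=\supp f\cup\supp g$, and any upper bound must coincide with $f=f+g$ on $\supp f$ and with $g=f+g$ on $\supp g$, so $f+g$ is in fact the \emph{least} upper bound, i.e. $f\vee g=f+g$ in $(C(X),\preceq)$. Since $T$ is an order isomorphism it preserves existing suprema, whence $T(f+g)=T(f\vee g)=Tf\vee Tg$. As \eqref{d2} has already been secured, $Tf$ and $Tg$ are orthogonal, so the same computation in $C(Y)$ gives $Tf\vee Tg=Tf+Tg$. Combining the two identities yields $T(f+g)=Tf+Tg$.

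The main obstacle I anticipate is the converse half of the order-theoretic characterization, namely manufacturing a nonzero common lower bound from a point where $f$ and $g$ are simultaneously nonzero: one must select the right open set $V$, observe that the mere existence of a common upper bound is exactly what forces $f=g$ on $\overline V$ and $f=g=0$ on $\partial V$, and then check that extending $f|_{\overline V}$ by zero produces a continuous function. Once that is in place, the remainder is routine bookkeeping with $\preceq$ and with the facts that $T$ and $T^{-1}$ fix $0$ and respect suprema.
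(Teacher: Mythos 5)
Your proof is correct, but it is organized around a key lemma that the paper never states: a purely order-theoretic characterization of orthogonality, namely that $f\cdot g=0$ if and only if $\{f,g\}$ has a common upper bound in $(C(X),\preceq)$ \emph{and} $0$ is its only common lower bound. Once this is established in an arbitrary $C(X)$, the equivalence of \romanref{d1} and \romanref{d2} is pure transport: both conditions are expressed in the language of the poset and its least element, so they are preserved by $T$ and $T^{-1}$. The paper instead works directly with $T$: from $f\cdot g=0$ it gets $Tf,Tg\preceq T(f+g)$, builds the function $\varphi=(Tf)\cdot \mathds{1}_{\supp Tf\cap\supp Tg}$ on the $Y$ side, proves its continuity by a net argument showing $Tf$ vanishes on $\partial\supp Tg$, pulls $\varphi$ back through $T^{-1}$ to a common lower bound of the orthogonal pair $f,g$, concludes $\varphi=0$, hence $\supp Tf\cap \supp Tg$ has empty interior and $Tf\cdot Tg=0$; the reverse implication is then obtained by applying this to $T^{-1}$. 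The underlying mechanism is the same in both arguments --- a common upper bound forces the two functions to agree on the overlap of their supports and to vanish on its boundary, so the restriction to that overlap extended by zero is a continuous common lower bound, which orthogonality then kills --- but your packaging buys genuine advantages: the characterization is $T$-free and reusable, both directions of \romanref{d1}$\iff$\romanref{d2} follow symmetrically at once, and continuity of your $\ell$ (built from $V=\{f\neq0\}\cap\{g\neq0\}$, with $f=g=0$ on $\partial V$) follows from the pasting lemma rather than a net computation. What the paper's version buys is directness: it never needs to formulate or prove the biconditional, only the one implication it uses. Your treatment of \romanref{d3} --- $\sup_{\preceq}\{f,g\}=f+g$ for orthogonal pairs, and order isomorphisms preserve existing suprema --- coincides with the paper's. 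One small point you assert without proof, as does the paper, is that $T0=0$; this is immediate since $0$ is the least element of $(C(X),\preceq)$ and order isomorphisms preserve least elements, but it is worth recording since your transfer argument leans on it twice (to move nonzero lower bounds in both directions).
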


\begin{proof}\eqref{d1} $\implies$ \eqref{d2}. Suppose that $f, g\in C(X)$ are orthogonal functions. We then have $f,g\preceq f+g$, whence $Tf, Tg \preceq T(f+g)$, and setting $A:=\supp Tf\cap \supp Tg$ we obtain that $Tf(x)=Tg(x)$ for every $x\in A$. Define a function $\varphi$ on $Y$ by $\varphi := (Tf)\cdot \mathds{1}_A,$ where $\mathds{1}_A$ stands for the indicator function of $A$. We \emph{claim} that $\varphi$ is continuous on $Y$.\smallskip

To see this, let us first note that $\varphi = (Tf)\cdot \mathds{1}_{\supp Tg}$. In order to demonstrate the continuity of $\varphi$ it is therefore enough to show that $Tf$ vanishes on $\partial \supp Tg$, the boundary of $\supp Tg$. To this end, fix $x\in \partial\supp Tg$; then $Tg(x)=0$. Let $(x_\alpha)_{\alpha\in \Lambda}$ be a net in $Y$ that converges to $x$ such that $Tg(x_\alpha)\neq 0$ for each $\alpha\in \Lambda$. As $Tg\preceq T(f+g)$, we see that $Tg(x_\alpha) = (T(f+g))(x_\alpha)$ for each $\alpha\in \Lambda$. Consequently,
$$0 = Tg(x) = \lim_{\alpha} Tg(x_\alpha) = \lim_{\alpha} (T(f+g))(x_\alpha) = (T(f+g))(x).$$
The conclusion now follows as we also have $Tf\preceq T(f+g)$. \smallskip

Now, we observe that $\varphi\preceq Tf, Tg$ and so $T^{-1}(\varphi) \preceq f,g$. Recalling that $f$ and $g$ are orthogonal, we infer that $T^{-1}(\varphi)=0$ as $0$ is the only element of $C(X)$ that is dominated in the compatibility ordering by two orthogonal functions. Thus, $\varphi=0$ as each compatibility isomorphism maps $0$ to $0$. Consequently, $A$ has empty interior (otherwise we would have found a point $x\in A$ such that $\varphi(x)\neq 0$) and thus $(Tf)\cdot (Tg)=0$.\smallskip

\eqref{d2} $\implies$ \eqref{d1}. Apply the previous implication to $T^{-1}$.

\eqref{d2} $\implies$ \eqref{d3}. By the above, we know that whenever $Tf$ and $Tg$ are orthogonal, then so are $f$ and $g$. From these facts we clearly have
$$\sup\nolimits_{\preceq}\{f, g \}=f+g \quad\text{and}\quad \sup\nolimits_{\preceq}\{Tf, Tg\}= Tf+Tg,$$
and the fact that $T$ is a compatibility isomorphism easily implies that
$$T\left(\sup\nolimits_{\preceq}\{f, g \}\right)= \sup\nolimits_{\preceq}\{Tf, Tg\}.$$
The desired conclusion follows.\end{proof}

In general \eqref{d3} does not imply \eqref{d1} because the identity map on $C(X)$ is plainly an additive compatibility isomorphism. We remark in passing that should the infimum $\inf\nolimits_{\preceq}\{f, g\}$ exist ($f,g\in C(X)$), it is preserved by $T$.
\begin{remark}\label{rho}Let $Y$ be a compact metric space which contains at least two points and let $X$ be the one-point space. For $f,g\in C(X)$ one has $f\preceq g$ if and only if either $f=g$ or $f=0$. Let us then fix a bijection $T\colon C(X)\to C(Y)$ which maps the zero function to the zero function. Then $T$ is a compatibility morphism whose inverse is not a compatibility morphism. Thus, the requirement that the inverse of a~compatibility isomorphism be a compatibility morphism is not redundant.\end{remark}
\begin{definition}Let $X$ be a topological space. For $f\in C(X)$ we define $$\sigma(f) = \interior\supp f,$$
the interior of the support of $f$, and
$$\varrho(f) = \overline{\interior f^{-1}(0)}.$$
\end{definition}
\begin{remark}\label{stupidremark}We note that $\varrho(f) = X \setminus \sigma(f)$ for $f\in C(X)$. Indeed,
$$ \sigma(f) =  \interior \overline{X\setminus f^{-1}(0)} = \interior (X \setminus \interior f^{-1}(0)) = X \setminus \overline{\interior f^{-1}(0)} = X\setminus \varrho(f). $$

\end{remark}
\begin{remark}\label{clause4}
We can now add another clause to Lemma~\ref{additive} which is equivalent to \eqref{d1}:
\begin{romanenumerate}
\item[(iv)]\label{d4} $\sigma(f)\cap\sigma(g) = \varnothing$.
\end{romanenumerate}
Obviously (iv) $\implies$ \eqref{d1}; to see that the opposite implication holds as well, assume that $\sigma(f)\cap\sigma(g)\neq\varnothing$. Let $U=\{x\in X\colon f(x)\neq 0\}$ and $V=\{x\in X\colon g(x)\neq 0\}$. Then
$$U\cap V\supseteq \big(\sigma(g)\cap\sigma(f)\big)\setminus(\partial U \cup \partial V) \neq \varnothing$$ as boundaries of open sets are nowhere dense. Consequently, $f\cdot g$ assumes non-zero values.
\end{remark}
The following lemma is a standard fact from point-set topology. We include a proof for the reader's convenience.

\begin{lemma}\label{base}Let $X$ be a completely regular space. Then the family $\{\sigma(f)\colon f\in C(X)\}$ forms an open base for $X$.\end{lemma}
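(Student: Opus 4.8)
The plan is to prove that $\{\sigma(f) : f \in C(X)\}$ is an open base by verifying two things: first, that each $\sigma(f)$ is open (which is immediate, as $\sigma(f) = \interior \supp f$ is by definition the interior of a set), and second, the substantive content, that given any open set $U \subseteq X$ and any point $x \in U$, there exists $f \in C(X)$ with $x \in \sigma(f) \subseteq U$. The key tool will be complete regularity: for the point $x$ and the closed set $X \setminus U$ (which does not contain $x$), there is a continuous function separating them.

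\medskip

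More precisely, first I would invoke complete regularity to obtain a continuous function $f \colon X \to [0,1]$ with $f(x) = 1$ and $f \equiv 0$ on $X \setminus U$. Then $x$ belongs to the open set $V := \{y \in X : f(y) > 0\}$, and since $f$ vanishes off $U$ we have $V \subseteq U$. The point $x$ lies in $V$, and $V$ is open with $V \subseteq \{y : f(y) \neq 0\} \subseteq \supp f$; hence $x \in V \subseteq \interior \supp f = \sigma(f)$. It remains to check the inclusion $\sigma(f) \subseteq U$. This follows because $\supp f = \overline{\{y : f(y) \neq 0\}}$ is contained in the closure of $U$\,---\,but that only gives $\supp f \subseteq \overline{U}$, which is not quite what we want.

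\medskip

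The main obstacle, then, is precisely this containment $\sigma(f) \subseteq U$: taking the support introduces a closure, and $\supp f$ may genuinely spill over the boundary of $U$ into $\overline{U} \setminus U$. The clean fix is to arrange that $f$ vanishes not merely on $X \setminus U$ but on a \emph{neighbourhood} of $X \setminus U$. To do this I would choose a slightly smaller open set: by complete regularity applied to $x$ and $X \setminus U$, pick $f$ with $f(x) = 1$ and $f = 0$ on $X \setminus U$, and observe that the set $W := \{y : f(y) > 1/2\}$ is an open neighbourhood of $x$ whose closure $\overline{W}$ is contained in $\{y : f(y) \geqslant 1/2\} \subseteq U$. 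Now replacing $f$ by a function supported inside $W$\,---\,for instance composing $f$ with a continuous map $\mathbb{R} \to \mathbb{R}$ that is zero on $(-\infty, 1/2]$ and positive on $(1/2, \infty)$\,---\,yields a function $g$ with $\supp g \subseteq \overline{W} \subseteq U$ and $g(x) > 0$. Then $x \in \{g \neq 0\} \subseteq \sigma(g) \subseteq \supp g \subseteq U$, completing the proof.

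\medskip

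I expect the whole argument to be short once the closure issue is handled correctly; the only place demanding care is ensuring the support stays inside $U$ rather than merely inside $\overline{U}$, which is resolved by the standard trick of truncating the separating function so that it vanishes on a full neighbourhood of the complement of $U$.
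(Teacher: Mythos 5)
Your proof is correct and in substance the same as the paper's: both identify the same obstacle---that $\supp f$ may spill over into $\overline{U}\setminus U$ when $f$ merely vanishes on $X\setminus U$---and resolve it by an interposed shrinking step. The only difference is the order of operations: the paper first invokes regularity to produce an open $U$ with $\overline{U}\subseteq V$ and then separates $x$ from $X\setminus U$, whereas you separate first and then truncate the function at the level $1/2$ so that it vanishes on a neighbourhood of the complement---which is exactly the argument by which complete regularity yields the paper's regularity step in the first place.
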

\begin{proof}Let $V$ be a non-empty open subset of $X$ and let $x\in V$. We assumed $X$ to be (completely) regular so there exists an open neighbourhood $U$ of $x$ such that $\overline{U}\subseteq V$. By complete regularity, there exists $f\in C(X)$ such that $f(x)=1$ and $f(y)=0$ for $y\in X\setminus U$. Then $\sigma(f)\subseteq \supp f\subseteq \overline{U} \subseteq V$ and we are done. \end{proof}

\begin{proposition}\label{wd}Let $X$ and $Y$ be completely regular spaces and let $T\colon C(X)\to C(Y)$ be a~ compatibility isomorphism. If $f,g\in C(X)$ and $\sigma(f) \subseteq \sigma(g)$, then $\sigma(Tf) \subseteq \sigma(Tg)$. In particular, $\sigma(f) = \sigma(g)$ if and only if $\sigma(Tf) = \sigma(Tg)$.\end{proposition}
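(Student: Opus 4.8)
The plan is to translate the purely set-theoretic relation $\sigma(f)\subseteq\sigma(g)$ into a statement phrased solely in terms of orthogonality, since orthogonality is the one relation we already know $T$ preserves (Lemma~\ref{additive}, together with its reformulation in Remark~\ref{clause4}). Concretely, I would first isolate the following characterisation: for $f,g\in C(X)$ one has $\sigma(f)\subseteq\sigma(g)$ if and only if every $h\in C(X)$ orthogonal to $g$ is also orthogonal to $f$. Once this is in hand, the proposition follows almost formally: replacing ``orthogonal'' by the equivalent condition on the $T$-images and using that $T$ is a bijection, the quantifier over $h\in C(X)$ may be pushed through $T$ to a quantifier over $k=Th\in C(Y)$, yielding $\sigma(Tf)\subseteq\sigma(Tg)$. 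The ``in particular'' clause then drops out by applying the inclusion statement in both directions.

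The forward implication of the characterisation is immediate: if $\sigma(f)\subseteq\sigma(g)$ and $h$ is orthogonal to $g$, then $\sigma(h)\cap\sigma(f)\subseteq\sigma(h)\cap\sigma(g)=\varnothing$ by Remark~\ref{clause4}. The substance lies in the converse, which I would prove by contraposition and which is where I expect the only real difficulty. Assume $\sigma(f)\not\subseteq\sigma(g)$; I must manufacture a witness $h$ orthogonal to $g$ but not to $f$. By Remark~\ref{stupidremark}, $\sigma(f)\cap\varrho(g)\neq\varnothing$, and since $\varrho(g)=\overline{\interior g^{-1}(0)}$ while $\sigma(f)$ is open, the open set $\sigma(f)$ must already meet $\interior g^{-1}(0)$. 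Thus $O:=\sigma(f)\cap\interior g^{-1}(0)$ is a non-empty open set contained in $\sigma(f)$ on which $g$ vanishes identically, so $O$ is disjoint from $\supp g$ and \emph{a fortiori} from $\sigma(g)$. Lemma~\ref{base} now supplies $h\in C(X)$ with $\varnothing\neq\sigma(h)\subseteq O$; this $h$ satisfies $\sigma(h)\cap\sigma(g)=\varnothing$ and $\varnothing\neq\sigma(h)\subseteq\sigma(f)$, i.e.\ it is orthogonal to $g$ but not to $f$, as required.

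The point to be careful about---and the reason the argument is not entirely trivial---is precisely the step extracting an open subset of $\sigma(f)$ on which $g$ vanishes. It is \emph{not} enough to produce a single point of $\sigma(f)\setminus\sigma(g)$, since such a point could lie on the boundary of $\supp g$, every neighbourhood of which meets $\sigma(g)$, so that no witness $h$ could be found there. What rescues the argument is that $\sigma(g)=\interior\supp g$ is a regular open set, equivalently that $\overline{\sigma(g)}=\supp g$, so that the complement $\varrho(g)=X\setminus\sigma(g)$ is exactly the closure of the genuinely open zero-set $\interior g^{-1}(0)$; this is what lets an open set meeting $\varrho(g)$ reach inside $\interior g^{-1}(0)$. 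With the characterisation established, the transfer through $T$ via Lemma~\ref{additive} and Remark~\ref{clause4} is routine, and applying it to both $T$ and $T^{-1}$ (both of which are compatibility isomorphisms) gives the stated equivalence $\sigma(f)=\sigma(g)\iff\sigma(Tf)=\sigma(Tg)$.
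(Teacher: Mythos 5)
Your proof is correct and is essentially the paper's own argument: both rest on the equivalence of orthogonality with $\sigma(f)\cap\sigma(g)=\varnothing$ (Lemma~\ref{additive} and Remark~\ref{clause4}) plus a witness function supplied by complete regularity via Lemma~\ref{base}, and your set $O=\sigma(f)\cap\interior g^{-1}(0)$ coincides with $\sigma(f)\setminus\supp g$, the domain-side twin of the paper's $V=\sigma(Tf)\setminus\supp Tg$. The only differences are organizational: you isolate a two-sided orthogonality characterisation of the inclusion $\sigma(f)\subseteq\sigma(g)$ and construct the witness in $X$, transferring through $T$ formally, whereas the paper argues by contradiction and builds the witness directly in $Y$ (using bijectivity of $T$ to prescribe $Th$), handling with a short case distinction the boundary issue that you dispose of via $\varrho(g)=\overline{\interior g^{-1}(0)}$.
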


\begin{proof}Without loss of generality we may take a non-zero function $f\in C(X)$ as $\sigma(T0)$ is empty. Assume, in search of a contradiction, that $\sigma(f) \subseteq \sigma(g)$ and $\sigma(Tf) \not\subseteq \sigma(Tg)$. We note that if $ \sigma(Tf) \subseteq {\rm supp}\, Tg$, then, by openness of $\sigma(Tf)$,  $\sigma(Tf)\subseteq \sigma(Tg)$, however we have excluded this possibility.\smallskip

Set $V=\sigma(Tf) \setminus \supp Tg$. Then $V$ is a non-empty open set as $\sigma(Tf)\subsetneq \sigma(Tg)$, hence $\sigma(Tf)\subsetneq {\rm supp}(Tg)$. Pick any point $v\in V$ with $Tf(v)\neq 0$ (the set of such points is dense in $V$) and choose a function $h\in C(X)$ such that $Th(v)=1$ and $\supp Th\subseteq V$ (such a~function exists because $T$ is a bijection and $Y$ is completely regular). Then $Th$ and $Tg$ are orthogonal; by Lemma~\ref{additive}, $h$ and $g$ must be orthogonal too. Since $\sigma(f)\subseteq\sigma(g)$, we also have that $h$ and $f$ are orthogonal and one more application of Lemma~\ref{additive} gives us that the same must be true for $Th$ and $Tf$, which contradicts $Th(v)\cdot Tf(v)\neq 0$.
\end{proof}

\begin{proposition}\label{P:tau} Let $X$ and $Y$ be completely regular spaces and let $T\colon C(X)\to C(Y)$ be a~compatibility isomorphism. Then the map
$$\tau(\sigma(f)) = \sigma(Tf) \quad \big(f\in C(X)\big)$$
is a (well-defined) bijection between $\{\sigma(f)\colon f\in C(X)\}$ and $\{\sigma(g)\colon g\in C(Y)\}$, which preserves the inclusion.
\end{proposition}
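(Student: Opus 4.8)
The plan is to deduce every assertion of the proposition from Proposition~\ref{wd}; indeed, almost no new work is required, since the biconditional
$$\sigma(f)=\sigma(g)\iff\sigma(Tf)=\sigma(Tg)\qquad(f,g\in C(X))$$
established there is precisely what makes $\tau$ a well-defined injection, while the implication $\sigma(f)\subseteq\sigma(g)\Rightarrow\sigma(Tf)\subseteq\sigma(Tg)$ is exactly the inclusion-preservation we seek.

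First I would verify that $\tau$ is well defined. The formula $\tau(\sigma(f))=\sigma(Tf)$ prescribes the value of $\tau$ on a set of the form $\sigma(f)$ through a choice of representing function $f$, so one must check that the value is independent of this choice. If $f,f'\in C(X)$ satisfy $\sigma(f)=\sigma(f')$, then the forward direction of the biconditional in Proposition~\ref{wd} yields $\sigma(Tf)=\sigma(Tf')$, so $\tau$ does not depend on the representative. The same biconditional, read in the reverse direction, gives injectivity: if $\tau(\sigma(f))=\tau(\sigma(g))$, that is $\sigma(Tf)=\sigma(Tg)$, then $\sigma(f)=\sigma(g)$.

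Next, surjectivity follows from the bijectivity of $T$: given any $g\in C(Y)$, put $f=T^{-1}g\in C(X)$; then $\tau(\sigma(f))=\sigma(Tf)=\sigma(g)$, so every element of $\{\sigma(g)\colon g\in C(Y)\}$ lies in the range of $\tau$. Finally, inclusion preservation is nothing but the first assertion of Proposition~\ref{wd}: if $\sigma(f)\subseteq\sigma(g)$, then $\tau(\sigma(f))=\sigma(Tf)\subseteq\sigma(Tg)=\tau(\sigma(g))$.

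Since the genuine content -- the transfer of containment and equality of the sets $\sigma(\cdot)$ across $T$ -- was already secured in Proposition~\ref{wd}, no real obstacle remains; the only points demanding care are bookkeeping ones, namely that $\tau$ acts on sets rather than on functions (so well-definedness must be checked) and that injectivity and well-definedness are two faces of the same equivalence. Should one further wish $\tau$ to be an order \emph{isomorphism} of the two inclusion-ordered families, one observes that $T^{-1}$ is again a compatibility isomorphism, whence Proposition~\ref{wd} applied to $T^{-1}$ shows that $\tau^{-1}$ preserves inclusion as well.
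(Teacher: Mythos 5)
Your proposal is correct and takes essentially the same approach as the paper: everything is deduced from Proposition~\ref{wd}, with well-definedness from its forward direction, surjectivity from the bijectivity of $T$, and inclusion-preservation read off directly. The only cosmetic difference is that the paper establishes injectivity by exhibiting the explicit inverse $\iota(\sigma(g))=\sigma(T^{-1}g)$ (well-defined since $T^{-1}$ is also a compatibility isomorphism), whereas you invoke the biconditional stated in Proposition~\ref{wd} --- which rests on the same application of that proposition to $T^{-1}$, so the arguments coincide in substance.
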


\begin{proof}
Suppose that $f,g\in C(X)$ are such that $\sigma(f)=\sigma(g)$. Then, by Proposition~\ref{wd}, $\sigma(Tf)=\sigma(Tg)$ so $\tau$ is a well-defined map. \smallskip

To demonstrate that the map $\tau$ is surjective, take $g\in C(Y)$ and so $T^{-1}g\in C(X)$. Then $\sigma(g)=\tau(\sigma(T^{-1}g))$ is in the range of $\tau$. Define $\iota\colon\{\sigma(g)\colon g\in C(Y)\}\to\{\sigma(f)\colon f\in C(X)\}$ by
$$\iota\big(\sigma(g)\big)=\sigma(T^{-1}g).$$
Then $\iota$ is well-defined by the same argument that we have used for $\tau$, and it is clear that $\iota=\tau^{-1}$. Therefore $\tau$ is injective. Proposition~\ref{wd} also ensures us that both $\tau$ and $\iota=\tau^{-1}$ preserve inclusion, and the proof is complete.
\end{proof}

In the light of Remark~\ref{stupidremark}, the following is an immediate corollary to Proposition~\ref{P:tau}.

\begin{corollary}\label{P:vartheta} Let $X$ and $Y$ be completely regular spaces and let $T\colon C(X)\to C(Y)$ be a~compatibility isomorphism. Then the map
$$\vartheta(\varrho(f)) = \varrho(Tf) \quad \big(f\in C(X)\big)$$
is a well-defined bijection between $\{\varrho(f)\colon f\in C(X)\}$ and $\{\varrho(g)\colon g\in C(Y)\}$, which preserves the inclusion.
\end{corollary}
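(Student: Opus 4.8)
The plan is to deduce everything formally from Proposition~\ref{P:tau} by transporting it through the complementation furnished by Remark~\ref{stupidremark}. Recall that Remark~\ref{stupidremark} gives $\varrho(f) = X \setminus \sigma(f)$ for every $f \in C(X)$, and likewise $\varrho(Tf) = Y \setminus \sigma(Tf)$ for the image functions. Consequently the families $\{\varrho(f)\colon f \in C(X)\}$ and $\{\varrho(g)\colon g \in C(Y)\}$ are precisely the images of $\{\sigma(f)\colon f \in C(X)\}$ and $\{\sigma(g)\colon g \in C(Y)\}$ under the (involutive) complementation maps $S \mapsto X\setminus S$ and $S \mapsto Y\setminus S$, and $\vartheta$ is nothing but $\tau$ read through these complementations. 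All three claimed properties should then fall out mechanically.

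First I would verify well-definedness. Since complementation is injective, $\varrho(f) = \varrho(g)$ is equivalent to $\sigma(f) = \sigma(g)$; by the final assertion of Proposition~\ref{wd} this is equivalent to $\sigma(Tf) = \sigma(Tg)$, hence to $\varrho(Tf) = \varrho(Tg)$. Therefore $\vartheta$ is well-defined. Bijectivity is then immediate: $\vartheta$ is the composite of the bijection $\tau$ with two bijective complementations, so it is itself a bijection, its inverse being obtained by conjugating $\iota = \tau^{-1}$ in exactly the same manner.

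The only point requiring a moment's care is inclusion preservation, because complementation reverses inclusions; one must check that the reversal happens twice and hence cancels. For $f, g \in C(X)$ we have $\varrho(f) \subseteq \varrho(g)$ if and only if $\sigma(g) \subseteq \sigma(f)$; applying the inclusion-preserving bijection $\tau$ together with its inclusion-preserving inverse $\iota$, this holds if and only if $\sigma(Tg) \subseteq \sigma(Tf)$, which in turn is equivalent to $\varrho(Tf) \subseteq \varrho(Tg)$. The two order reversals cancel, so $\vartheta$ preserves inclusion, as required. I do not anticipate any genuine obstacle here: the entire content is already packaged in Proposition~\ref{P:tau}, and the corollary merely records the dual statement obtained by passing to complements, the sole subtlety being the bookkeeping of the cancelling inclusion reversals.
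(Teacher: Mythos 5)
Your proof is correct and follows exactly the route the paper intends: the paper gives no written proof, declaring the corollary ``immediate'' from Remark~\ref{stupidremark} (the identity $\varrho(f)=X\setminus\sigma(f)$) together with Proposition~\ref{P:tau}, and your write-up simply supplies the details of that complementation argument, including the one genuine subtlety---that the two inclusion reversals under complementation cancel.
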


\subsection{Regularly open and regularly closed sets}

Let $X$ be a completely regular space. A subset $A\subseteq X$ is \emph{regularly open} if $A = \interior \overline{A}$ and \emph{regularly closed} if $A=\overline{\interior A}$. It is a~triviality that a set is regularly closed if and only if its complement is regularly open. For example, for every $f\in C(X)$, the set $\sigma(f)$ is regularly open (moreover, $\overline{\sigma(f)}={\rm supp}\, f$). As it is well-known and easy to verify, the family ${\rm RO}(X)$ consisting of all regularly open sets forms a complete, distributive lattice with operations
$$U\vee_{{\rm ro}} V = \interior \overline{U\cup V}\text{ and }U \wedge_{{\rm ro}} V = U\cap V.$$
Similarly, the family ${\rm RC}(X)$ of all regularly closed sets forms a complete, distributive lattice with operations
$$F\vee_{{\rm rc}} G = F\cup G \text{ and } F \wedge_{{\rm rc}} G= \overline{\interior F\cap G}.$$

\begin{proposition}Let $X$ be a completely regular space. Then for any $f,g\in C(X)$ we have
$$\sigma(f)\vee_{{\rm ro}} \sigma(g) = \sigma(|f|+|g|)\text{ and } \sigma(f)\wedge_{{\rm ro}} \sigma(g) = \sigma(fg)$$
and
$$\varrho(f)\wedge_{{\rm rc}} \varrho(g) = \varrho(|f|+|g|)\text{ and } \varrho(f)\vee_{{\rm rc}} \varrho(g) = \varrho(fg).$$
\end{proposition}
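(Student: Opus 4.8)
The plan is to reduce all four identities to elementary point-set topology by working with the open sets $U_f=\{x\in X\colon f(x)\neq 0\}$ and $U_g=\{x\in X\colon g(x)\neq 0\}$, for which $\supp f=\overline{U_f}$ and hence $\sigma(f)=\interior\overline{U_f}$ (and likewise for $g$). Since $\{x\colon (fg)(x)\neq 0\}=U_f\cap U_g$ and $\{x\colon (|f|+|g|)(x)\neq 0\}=U_f\cup U_g$, everything will follow once I compute $\interior\overline{U_f\cap U_g}$ and $\interior\overline{U_f\cup U_g}$ in terms of $\sigma(f)$ and $\sigma(g)$.

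For the meet I would prove $\sigma(fg)=\sigma(f)\cap\sigma(g)$, that is, $\interior\overline{U_f\cap U_g}=\interior\overline{U_f}\cap\interior\overline{U_g}$; as the right-hand side is exactly the lattice meet $\sigma(f)\wedge_{{\rm ro}}\sigma(g)$, this is the first claimed equality. The inclusion $\subseteq$ is immediate from the monotonicity of $U\mapsto\interior\overline{U}$. For $\supseteq$ the crux is the density statement $\interior\overline{U_f}\cap\interior\overline{U_g}\subseteq\overline{U_f\cap U_g}$: given $y$ in the (open) left-hand side and an arbitrary open neighbourhood $N$ of $y$, I would first use $y\in\overline{U_f}$ together with the neighbourhood $N\cap\interior\overline{U_g}$ of $y$ to extract a point $z\in N\cap U_f\cap\overline{U_g}$, and then use $z\in\overline{U_g}$ together with the open neighbourhood $N\cap U_f$ of $z$ to meet $U_g$, giving $N\cap U_f\cap U_g\neq\varnothing$; as $N$ was arbitrary, $y\in\overline{U_f\cap U_g}$, and taking interiors yields the reverse inclusion. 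This density argument is where the real work lies and is the main obstacle; the rest is formal.

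For the join I would invoke two standard facts: $\overline{A\cup B}=\overline{A}\cup\overline{B}$ for any sets, and $\overline{\interior\overline{U}}=\overline{U}$ for $U$ open. Then $\sigma(f)\vee_{{\rm ro}}\sigma(g)=\interior\overline{\sigma(f)\cup\sigma(g)}=\interior\big(\overline{\sigma(f)}\cup\overline{\sigma(g)}\big)=\interior\big(\overline{U_f}\cup\overline{U_g}\big)=\interior\overline{U_f\cup U_g}=\sigma(|f|+|g|)$, where the third equality uses $\overline{\sigma(f)}=\overline{\interior\overline{U_f}}=\overline{U_f}$. This settles the second equality with no further difficulty.

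Finally, the two $\varrho$-identities follow by complementation. By Remark~\ref{stupidremark} one has $\varrho(h)=X\setminus\sigma(h)$ for every $h\in C(X)$, and the De~Morgan computations $X\setminus(U\wedge_{{\rm ro}}V)=(X\setminus U)\vee_{{\rm rc}}(X\setminus V)$ and $X\setminus(U\vee_{{\rm ro}}V)=(X\setminus U)\wedge_{{\rm rc}}(X\setminus V)$, which one checks directly from the definitions of the lattice operations via $X\setminus\interior A=\overline{X\setminus A}$ and $X\setminus\overline{A}=\interior(X\setminus A)$, show that complementation is a dual isomorphism from $\mathrm{RO}(X)$ onto $\mathrm{RC}(X)$. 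Applying it to the two $\sigma$-identities gives $\varrho(f)\vee_{{\rm rc}}\varrho(g)=X\setminus\sigma(fg)=\varrho(fg)$ and $\varrho(f)\wedge_{{\rm rc}}\varrho(g)=X\setminus\sigma(|f|+|g|)=\varrho(|f|+|g|)$, completing the proof.
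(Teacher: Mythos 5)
Your proof is correct, but it runs in the direction opposite to the paper's. The paper proves the two $\varrho$-identities directly and then appeals to the duality $\varrho(h)=X\setminus\sigma(h)$ (Remark~\ref{stupidremark}) to get the $\sigma$-identities; you prove the two $\sigma$-identities on cozero sets and dualize to get the $\varrho$-identities, spelling out the De~Morgan computation that the paper leaves implicit (a small gain in completeness). The hard steps correspond to each other under complementation but are argued differently. The paper's crux is the inclusion $\interior (fg)^{-1}(0)\subseteq\varrho(f)\cup\varrho(g)$, proved by a pointwise argument that genuinely uses continuity: near a point of $V=\interior(fg)^{-1}(0)\setminus\varrho(f)$ one finds points where $f\neq 0$, hence $g=0$ there, and closedness of $g^{-1}(0)$ forces $g$ to vanish at the point itself. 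Your crux is the density fact $\interior\overline{U}\cap\interior\overline{V}\subseteq\overline{U\cap V}$ for \emph{arbitrary} open sets $U,V$, proved by the two-step approximation argument; this is a pure point-set statement about the regularization map $U\mapsto\interior\overline{U}$ (namely that it turns finite intersections of open sets into meets in ${\rm RO}(X)$), with continuity entering only through the identification of the cozero sets of $fg$ and $|f|+|g|$. So your route isolates a reusable lattice-theoretic lemma valid in any topological space, while the paper's stays closer to zero sets and the family $\Theta(X)$ it needs later, and gets the easy half of each identity from the observation (Remark~\ref{miracle}) that the lattice order on ${\rm RC}(X)$ is inclusion. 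Both treatments handle the $|f|+|g|$ identity softly (monotonicity plus closure identities in your case, the lattice order in the paper's) and reserve the real work for the $fg$ identity.
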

\begin{proof}Given the duality between the operations $\sigma$ and $\varrho$, that is, the relation
$$\begin{array}{lclcl}X\setminus \big(\sigma(f) \vee_{{\rm ro}} \sigma(g)\big)& =& \overline{X\setminus \overline{\sigma(f)\cup \sigma(g)}} & = & \overline{X\setminus \overline{\varrho(f)\cap \varrho(g)}} \\
& = & \overline{{\rm int}\, \varrho(f)\cap \varrho(g)} & = & \varrho(f)\wedge_{{\rm rc}} \varrho(g)\end{array}\qquad (f,g\in C(X)), $$
it is enough to prove the latter two equalities only. \smallskip

We have
$$\varrho(f)\wedge_{{\rm rc}} \varrho(g)\subseteq  \varrho(f)\cap \varrho(g) = \overline{\interior f^{-1}(0)} \cap \overline{\interior g^{-1}(0)}\subseteq f^{-1}(0)\cap g^{-1}(0) = (|f|+|g|)^{-1}(0).$$
For the converse inclusion, it is enough to notice that $$(|f|+|g|)^{-1}(0)\subseteq f^{-1}(0)\text{ and }(|f|+|g|)^{-1}(0)\subseteq g^{-1}(0),$$ 
so by performing first the operation of taking the interior  and then taking the closure, the conclusion follows (this is clear, \emph{e.g.}, from the fact that the lattice ordering on ${\rm RC}(X)$ is the inclusion---\emph{cf.} Remark~\ref{miracle}).\smallskip

As for the second equality, the inclusion $\varrho(f)\vee_{{\rm rc}} \varrho(g)\subseteq \varrho(fg)$ is clear for $\varrho(f)\subseteq \varrho(fg)$ and $\varrho(g)\subseteq \varrho(fg)$. In order to prove the converse inequality, set $$V=\interior (fg)^{-1}(0)\setminus \varrho(f).$$ We will be finished when we prove that $V\subseteq\interior g^{-1}(0)$. Indeed, in this case we shall have $\interior(fg)^{-1}(0)\subseteq  \varrho(f)\cup\varrho(g)$. To see that $V\subseteq\interior g^{-1}(0)$, take any $x\in V$. Clearly, for any open neighbourhood $U\subseteq V$ of $x$ we have points $y\in U\setminus f^{-1}(0)$ because otherwise we would have $U\subseteq \interior f^{-1}(0)$, which contradicts $V\cap\varrho(f)=\varnothing$. However $y\in V\subseteq\interior (fg)^{-1}(0)$, so $g(y)=0$ (as $f(y)\neq 0$), and it follows from the continuity of $g$ that $g(x)=0$. Thus $V\subseteq g^{-1}(0)$, and the openness of $V$ now implies the desired conclusion.
\end{proof}

We note that $\sigma(0\cdot \mathds{1}_X) = \varnothing = \varrho(\mathds{1}_X)$ and $\sigma(\mathds{1}_X) = X = \varrho(0 \cdot \mathds{1}_X)$, hence both lattices $\{\sigma(f)\colon f\in C(X)\}$ and $\{\varrho(f)\colon f\in C(X)\}$ have neutral elements with respect to their operations of join and meet, that is to say, they are \emph{bounded}. 

\begin{corollary}Let $X$ be a completely regular space. Then the families $\{\sigma(f)\colon f\in C(X)\}$ and $\{\varrho(f)\colon f\in C(X)\}$ are sublattices of the lattice of all regularly open and regularly closed sets, respectively. Consequently, they are {a posteriori}, bounded distributive lattices. \end{corollary}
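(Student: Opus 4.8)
The plan is to read everything off directly from the Proposition immediately preceding the statement, together with the elementary computations involving $\sigma(0\cdot\mathds{1}_X)$, $\sigma(\mathds{1}_X)$, $\varrho(0\cdot\mathds{1}_X)$ and $\varrho(\mathds{1}_X)$ recorded just above it. First I would recall that each $\sigma(f)$ is regularly open and each $\varrho(f)$ is regularly closed (the latter via the identity $\varrho(f)=X\setminus\sigma(f)$ of Remark~\ref{stupidremark}), so that $\{\sigma(f)\colon f\in C(X)\}\subseteq {\rm RO}(X)$ and $\{\varrho(f)\colon f\in C(X)\}\subseteq{\rm RC}(X)$. To prove that these families are \emph{sublattices} it suffices to verify closure under the two lattice operations of the respective ambient lattice, and this is exactly what the preceding proposition delivers.

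Concretely, for the $\sigma$-family the identities $\sigma(f)\vee_{{\rm ro}}\sigma(g)=\sigma(|f|+|g|)$ and $\sigma(f)\wedge_{{\rm ro}}\sigma(g)=\sigma(fg)$ exhibit both $\vee_{{\rm ro}}$ and $\wedge_{{\rm ro}}$ of two members of the family as members of the family again, since $|f|+|g|$ and $fg$ belong to $C(X)$; hence $\{\sigma(f)\colon f\in C(X)\}$ is closed under $\vee_{{\rm ro}}$ and $\wedge_{{\rm ro}}$ and is a sublattice of ${\rm RO}(X)$. The dual identities $\varrho(f)\wedge_{{\rm rc}}\varrho(g)=\varrho(|f|+|g|)$ and $\varrho(f)\vee_{{\rm rc}}\varrho(g)=\varrho(fg)$ yield the analogous conclusion for the $\varrho$-family inside ${\rm RC}(X)$.

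Finally I would address the ``consequently'' clause. Distributivity passes to any sublattice: since each family is closed under the operations, the meets and joins computed within the family coincide with those computed in the ambient lattice, and ${\rm RO}(X)$ and ${\rm RC}(X)$ were already observed to be distributive, so the distributive laws, holding in the ambient lattices, remain valid on the subfamilies. Boundedness follows from the displayed values $\sigma(0\cdot\mathds{1}_X)=\varnothing$, $\sigma(\mathds{1}_X)=X$, $\varrho(0\cdot\mathds{1}_X)=X$ and $\varrho(\mathds{1}_X)=\varnothing$, which place $\varnothing$ and $X$ in each family and hence supply a least and a greatest element.

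There is essentially no obstacle: all the substantive work was carried out in the preceding proposition, and the only point deserving a word of justification is that distributivity is inherited by sublattices, which is immediate once one observes that the operations on the subfamilies are genuine restrictions of the ambient lattice operations.
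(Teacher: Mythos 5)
Your proposal is correct and follows exactly the paper's (implicit) argument: the corollary is stated there without a separate proof precisely because closure under the lattice operations is the content of the preceding proposition, and boundedness comes from the displayed identities $\sigma(0\cdot\mathds{1}_X)=\varnothing=\varrho(\mathds{1}_X)$ and $\sigma(\mathds{1}_X)=X=\varrho(0\cdot\mathds{1}_X)$. Your added remark that distributivity is inherited because the operations on the subfamilies are genuine restrictions of the ambient ones is the right (and only) point needing mention.
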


\subsection{A detour to abstract lattice theory} Let $(A, \wedge, \vee)$ be an abstract lattice. Then $A$ is also a partially ordered set when equipped with the lattice ordering \begin{equation}\label{connectinglemma}a\leqslant b \iff a \wedge b = a \iff a \vee b = b \quad (a,b\in A)\end{equation}
(see, \emph{e.g.}, \cite[Lemma~2.8]{dp}). It is a remarkable property of lattices that in a sense, the lattice order remembers the lattice structure. More formally, we have the following characterisation of lattice isomorphisms (see, \emph{e.g.}, \cite[Lemma~2.19]{dp}).
\begin{lemma}\label{latticesaresuperb}Let $A$ and $B$ be lattices and let $\vartheta\colon A\to B$ be a bijection. Then $\vartheta$ is a~lattice isomorphism if and only if $\vartheta$ respects the lattice ordering, that is to say, it satisfies the relation $$a \leqslant b \Longrightarrow \vartheta(a)\leqslant \vartheta(b)\quad (a,b\in A).$$ \end{lemma}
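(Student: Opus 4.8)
The plan is to establish the two implications separately, reading the hypothesis that $\vartheta$ ``respects the lattice ordering'' as the statement that $\vartheta$ is an order isomorphism, so that both $\vartheta$ and $\vartheta^{-1}$ are monotone. This is the natural meaning for a bijection and is precisely what the argument requires.

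For the forward implication I would assume that $\vartheta$ is a lattice isomorphism and deduce monotonicity directly from the connecting formula \eqref{connectinglemma}. Indeed, if $a\leqslant b$ then $a\wedge b=a$, whence $\vartheta(a)\wedge\vartheta(b)=\vartheta(a\wedge b)=\vartheta(a)$, which by \eqref{connectinglemma} means $\vartheta(a)\leqslant\vartheta(b)$. Since $\vartheta^{-1}$ is again a lattice isomorphism, the same argument applied to it shows that $\vartheta^{-1}$ is monotone too, so $\vartheta$ respects the ordering in both directions.

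The substantial content lies in the converse. Here I would fix $a,b\in A$ and prove that $\vartheta(a\wedge b)=\vartheta(a)\wedge\vartheta(b)$, the argument for the join being dual. From $a\wedge b\leqslant a$ and $a\wedge b\leqslant b$, monotonicity of $\vartheta$ gives $\vartheta(a\wedge b)\leqslant\vartheta(a)$ and $\vartheta(a\wedge b)\leqslant\vartheta(b)$, so $\vartheta(a\wedge b)$ is a lower bound of $\{\vartheta(a),\vartheta(b)\}$ and hence $\vartheta(a\wedge b)\leqslant\vartheta(a)\wedge\vartheta(b)$. For the reverse inequality I would set $c=\vartheta(a)\wedge\vartheta(b)$ and transport it backwards: since $c\leqslant\vartheta(a)$ and $c\leqslant\vartheta(b)$, monotonicity of $\vartheta^{-1}$ yields $\vartheta^{-1}(c)\leqslant a$ and $\vartheta^{-1}(c)\leqslant b$, hence $\vartheta^{-1}(c)\leqslant a\wedge b$; applying $\vartheta$ once more gives $c\leqslant\vartheta(a\wedge b)$. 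Antisymmetry of the lattice ordering then forces equality, and the computation for $\vee$ is identical with all inequalities reversed.

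The main obstacle, modest as it is, is precisely this reverse inequality: monotonicity of $\vartheta$ alone only realises $\vartheta(a\wedge b)$ as \emph{some} lower bound of $\{\vartheta(a),\vartheta(b)\}$, and one cannot conclude that it is the \emph{greatest} such bound without using that $\vartheta^{-1}$ is monotone as well. Any order-preserving bijection from the four-element diamond onto the four-element chain is order-preserving yet fails to preserve meets, which shows that one-sided monotonicity is genuinely insufficient and that the order-reflecting property of $\vartheta$ must be invoked. Once both meet and join are preserved, $\vartheta$ is by definition a lattice isomorphism, completing the proof.
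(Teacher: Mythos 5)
Your proof is correct. The paper does not actually prove this lemma at all---it simply cites \cite[Lemma~2.19]{dp}---and your argument is precisely the standard one from that source: the forward direction via the connecting equivalence \eqref{connectinglemma}, and the converse by squeezing $\vartheta(a\wedge b)$ against $\vartheta(a)\wedge\vartheta(b)$ from both sides, using monotonicity of $\vartheta$ for one inequality and monotonicity of $\vartheta^{-1}$ for the other, dually for joins. Your opening caveat is substantive rather than cosmetic: as literally written, with only the one-way implication $a\leqslant b \Longrightarrow \vartheta(a)\leqslant\vartheta(b)$, the lemma is false, and your diamond-to-chain bijection is the standard counterexample; the hypothesis must be read as ``$\vartheta$ is an order isomorphism,'' that is, $\vartheta^{-1}$ is monotone as well. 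This reading is consistent with how the lemma is used in the paper: Proposition~\ref{P:tau} and Corollary~\ref{P:vartheta} are careful to establish that $\tau$ and $\vartheta$ \emph{and their inverses} preserve inclusion before Lemma~\ref{latticesaresuperb} is invoked, so no harm results downstream, but the statement itself deserves the sharper phrasing you gave it.
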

\begin{remark}\label{miracle}It is to be noted that the join operation $\wedge_{\rm ro}$ in the lattice of regularly open sets and the meet operation $\vee_{\rm rc}$ in the lattice of regularly closed sets are nothing but set-theoretic intersection and union, respectively, so by \eqref{connectinglemma}, the lattice ordering in either lattice is just the ordinary inclusion of sets.\end{remark}\smallskip

Let $A$ be a lattice. A proper subset $\mathscr{U}$ of $A$ is a \emph{prime filter} if for $a\in A$ and $b\in \mathscr{U}$ such that $b\leqslant a$ we have $a\in \mathscr{U}$, $a\wedge b \in  \mathscr{U}$ for all $a,b\in \mathscr{U}$ and at least one of the elements $a,b\in A$ belongs to $\mathscr{U}$ whenever $a\vee b\in \mathscr{U}$. Let us denote by $\Spec A$ \emph{the spectrum} of $A$, that is the set of all prime filters of $A$.\smallskip

Given a bounded, distributive lattice $A$, the spectrum of $A$ may be topologised by the base
\begin{equation}\label{baseets}U_a:=\{\EuScript{U}\in \Spec A\colon a\notin \mathscr{U}\}\quad (a\in A). \end{equation}
This is a base for a topology $\EuScript{G}$ on $\Spec A$. To see this note first that $\Spec A = U_0$. Secondly, for $a,b\in A$ one has $U_a \cap U_b = U_{a \vee b}$. Indeed $U_a \cap U_b = \{ \EuScript{U}\in \Spec A\colon a\notin \mathscr{U} \text{ and } b\notin \mathscr{U}   \}$. However for a given $\mathscr{U}\in \Spec A$, by primarity, $a\vee b\in \mathscr{U}$ if and only if $a\in \mathscr{U}$ or $b\in \mathscr{U}$. Consequently, $U_a \cap U_b = U_{a \vee b}$. \medskip

\noindent \emph{Note.} The just-defined topology $\EuScript{G}$ is not the same as the usual Zariski topology on $\Spec A$, which is defined as the topology generated by the base $\{\EuScript{U}\in \Spec A\colon a\in \mathscr{U}\}$ $(a\in A)$.\smallskip

Every ultrafilter in a bounded distributive lattice is prime (\cite[Theorem 10.11]{dp}), hence the set ${\rm Ult}\,(A)$ comprising all ultrafilters in $A$ is a subset of the spectrum of $A$ (in general proper), therefore it inherits the topology $\EuScript{G}$ from $\Spec A$.\smallskip

\begin{remark}\label{compactlattice}Let $X$ be a compact Hausdorff space. Since the family $\{\sigma(f)\colon f\in C(X)\}$ is an open base for $X$ (Lemma~\ref{base}), the family $\Theta(X):=\{\varrho(f)\colon f\in C(X)\}$ is a closed base for $X$, which means that every closed subset of $X$ is the intersection of some subfamily of $\Theta(X)$. By compactness of $X$, every ultrafilter $\EuScript{U}$ in the lattice $\Theta(X)$ converges. By the Hausdorff property of $X$, every ultrafilter converges to a single point $x\in X$, which means that $\EuScript{U}=\{F\in \Theta(X)\colon x\in F\}$. \end{remark}

For a point $x\in X$ we set $\mathscr{U}_x = \{F\in \Theta(X) \colon x\in F\}$. By Remark~\ref{compactlattice}, every ultrafilter in $\Theta(X)$ is of the form $\mathscr{U}_x$ for some $x\in X$. The next result is inspired by Wallman's theorem (\cite{wallman}) and is probably well-known.

\begin{proposition}\label{pi}Let $X$ be a compact Hausdorff space. Then the map $\Upsilon\colon X\to {\rm Ult}\, \Theta(X)$ given by $$\Upsilon(x)= \mathscr{U}_x\; (x\in X),$$ is a homeomorphism.   \end{proposition}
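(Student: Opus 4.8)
The plan is to show that $\Upsilon$ is a well-defined continuous bijection from a compact space to a Hausdorff space, and then invoke the standard fact that such a map is automatically a homeomorphism. The groundwork for bijectivity is essentially done in Remark~\ref{compactlattice}: every ultrafilter in $\Theta(X)$ is of the form $\mathscr{U}_x$ for some $x \in X$ (surjectivity), and the Hausdorff property guarantees that distinct points $x \neq y$ can be separated by a set in $\Theta(X)$ containing one but not the other, so that $\mathscr{U}_x \neq \mathscr{U}_y$ (injectivity). First I would spell out injectivity carefully: if $x \neq y$, then since $\{\sigma(f) : f \in C(X)\}$ is a base (Lemma~\ref{base}) and $X$ is Hausdorff, there is $f \in C(X)$ with $x \in \sigma(f)$ and $y \notin \supp f$; then $\varrho(f) = X \setminus \sigma(f)$ (Remark~\ref{stupidremark}) contains $y$ but not $x$, so $\varrho(f) \in \mathscr{U}_y \setminus \mathscr{U}_x$.

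The heart of the argument is \textbf{continuity}, and this is where I expect the main work to lie. The topology on ${\rm Ult}\,\Theta(X)$ is the subspace topology inherited from $\Spec\Theta(X)$, whose base consists of the sets $U_F = \{\mathscr{U} : F \notin \mathscr{U}\}$ for $F \in \Theta(X)$, as defined in~\eqref{baseets} with the lattice $A = \Theta(X)$. To verify continuity it suffices to check that the preimage under $\Upsilon$ of each basic open set $U_F \cap {\rm Ult}\,\Theta(X)$ is open in $X$. I compute
$$\Upsilon^{-1}(U_F) = \{x \in X : \mathscr{U}_x \in U_F\} = \{x \in X : F \notin \mathscr{U}_x\} = \{x \in X : x \notin F\} = X \setminus F,$$
which is open in $X$ precisely because $F = \varrho(g)$ is closed (being the closure of an open set). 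Hence $\Upsilon$ is continuous. Conversely, the same computation shows that $\Upsilon$ is an \emph{open} map onto its image: the image of the basic open set $\sigma(g)$ is $\{\mathscr{U}_x : x \in \sigma(g)\} = U_{\varrho(g)} \cap {\rm Ult}\,\Theta(X)$, using $x \in \sigma(g) \iff x \notin \varrho(g) \iff \varrho(g) \notin \mathscr{U}_x$; since these $\sigma(g)$ form a base, $\Upsilon$ is open.

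To finish, I would combine these observations in the cleanest available way. Since $X$ is compact and $\Upsilon$ is a continuous bijection, it suffices to know that ${\rm Ult}\,\Theta(X)$ is Hausdorff; then $\Upsilon$ is a continuous bijection from a compact space to a Hausdorff space and is therefore a homeomorphism. The Hausdorff property of the target follows from injectivity together with the description of basic open sets: given $\mathscr{U}_x \neq \mathscr{U}_y$, the separating set $\varrho(f) \in \mathscr{U}_y \setminus \mathscr{U}_x$ produced above yields disjoint basic neighbourhoods $U_{\varrho(f)}$ and its complement. Alternatively, and perhaps more transparently, the open-map computation already established shows directly that $\Upsilon$ is a homeomorphism onto its image, and surjectivity from Remark~\ref{compactlattice} completes the proof without separately invoking compactness. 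The only genuinely delicate point is to keep straight the inversion built into the base~\eqref{baseets} — that $U_F$ consists of those ultrafilters \emph{not} containing $F$ — and to match it correctly against the defining relation $F \in \mathscr{U}_x \iff x \in F$; once the bookkeeping $x \in \sigma(g) \iff \varrho(g) \notin \mathscr{U}_x$ is pinned down, every step reduces to a routine set-theoretic identity.
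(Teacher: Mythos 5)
Your proof is correct and, in its ``alternative'' ending, is essentially the paper's proof: the paper's whole argument consists of the chain $x\in\sigma(f)\iff \varrho(f)\notin\mathscr{U}_x\iff \Upsilon(x)\in U_{\varrho(f)}$, from which it concludes (with bijectivity supplied by Remark~\ref{compactlattice}) that $\Upsilon$ carries the base $\{\sigma(f)\colon f\in C(X)\}$ of $X$ onto the base \eqref{baseets} of the topology $\EuScript{G}$ on ${\rm Ult}\,\Theta(X)$, and is therefore a homeomorphism. Your explicit computations of injectivity, continuity and openness merely fill in details that the paper leaves implicit, and they are all correct.

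One sentence in your first ending is, however, wrong as stated: the complement of $U_{\varrho(f)}$ in ${\rm Ult}\,\Theta(X)$ is $\{\mathscr{U}_z\colon z\in\varrho(f)\}=\Upsilon(\varrho(f))$, the image of a closed set, and since $\Upsilon$ is (as the proposition asserts) a homeomorphism, this set is open only when $\varrho(f)$ is clopen in $X$. For $X=[0,1]$ and $f(t)=\max\{t-\tfrac12,0\}$ one has $\varrho(f)=[0,\tfrac12]$, which is not open, so ``$U_{\varrho(f)}$ and its complement'' are not two disjoint \emph{open} neighbourhoods, and your justification of the Hausdorff property of the target collapses at that point. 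The repair is easy: given $x\neq y$, use the Hausdorff property of $X$ and Lemma~\ref{base} to find $f,g\in C(X)$ with $x\in\sigma(f)$, $y\in\sigma(g)$ and $\sigma(f)\cap\sigma(g)=\varnothing$; then, since by Remark~\ref{compactlattice} every ultrafilter equals $\mathscr{U}_z$ for some $z$, and no $z$ lies in both $\sigma(f)$ and $\sigma(g)$, the basic sets $U_{\varrho(f)}\ni\mathscr{U}_x$ and $U_{\varrho(g)}\ni\mathscr{U}_y$ are disjoint. That said, your open-map ending is self-contained, never needs Hausdorffness of the target, and matches the paper, so the proposal as a whole stands.
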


\begin{proof}By Lemma~\ref{base}, the family $\{\sigma(f)\colon f\in C(X)\}$ is an open base for $X$. Let us recall that $\EuScript{G}$ is generated by the base $\{\EuScript{U}\in {\rm Ult}\, \Theta(X)\, \colon F \notin \mathscr{U}\}$ $(F\in \Theta(X))$. Taking into account the fact that for $x\in X$ and $f\in C(X)$ we have
$$x\in \sigma(f) \iff x\notin \varrho(f) \iff \varrho(f) \notin \Upsilon(x) \iff \Upsilon(x) \in U_{\varrho(f)},  $$
we arrive at the conclusion that $\Upsilon$ maps basic open sets onto basic open sets of $\EuScript{G}$, hence it is a homeomorphism. \end{proof}

\subsection{Proofs of Theorem~\ref{mainish}} We are now in a position to provide a short proof the first main result of the paper in the form of a string of previously established results.

\begin{proof}[Proof of Theorem~\ref{mainish}]Let $T\colon C(X)\to C(Y)$ be a compatibility isomorphism. By Corollary~\ref{P:vartheta}, the map $\vartheta(\varrho(f)) = \varrho(Tf)$ ($f\in C(X)$) is an inclusion-preserving bijection between $\Theta(X)$ and $\Theta(Y)$. Since the ordinary inclusion is the lattice ordering in $\Theta(X)$ and $\Theta(Y)$ (Remark~\ref{miracle}), by Lemma~\ref{latticesaresuperb}, $\vartheta$ is a lattice isomorphism between $\Theta(X)$ and $\Theta(Y)$. Consequently, ${\rm Ult}\, \Theta(X)$ and ${\rm Ult}\, \Theta(Y)$ are homeomorphic. By Proposition~\ref{pi}, $X$ is homeomorphic to ${\rm Ult}\, \Theta(X)$ and $Y$ is homeomorphic to ${\rm Ult}\, \Theta(Y)$ so the conclusion follows.\end{proof}

Let us remark that another endgame in the proof of Theorem~\ref{mainish}, based on Shirota's theorem, is possible. It would avoid us formally introducing the topology on the set of ultrafilters of $\Theta(X)$, yet we believe it would be less elementary and, of course, not self-contained. Moreover, Shirota's theorem also relies on topologisation of the ultrafilter space of a lattice so this strategy is not too different at the core. Nevertheless, we wish to present this approach too. In order to do so, we require a piece of terminology.\smallskip

\begin{definition}A distributive lattice with the least element is called an \emph{R-lattice} if it is isomorphic to a sublattice of ${\rm RO}(X)$ for a locally compact Hausdorff space $X$ whose members form a base for $X$ and have compact closures.\end{definition}
This is not the original definition of an R-lattice, however \cite[Theorem 1]{shi} asserts that it is indeed equivalent, so we take it as a definition. R-lattices have been recently studied in a broader context by Bice and Starling (\cite{bice}). The following result is a restatement of \cite[Theorem~2]{shi}.

\begin{theorem}[Shirota]\label{shirota}Let $X$ and $Y$ be locally compact Hausdorff spaces. Suppose that $A\subseteq {\rm RO}(X)$ and $B\subseteq {\rm RO}(Y)$ are R-lattices. If $A$ and $B$ are isomorphic as lattices, then $X$ and $Y$ are homeomorphic.\end{theorem}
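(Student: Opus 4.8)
The plan is to recover the space $X$ from the abstract lattice $A$ by a spectral construction entirely parallel to Proposition~\ref{pi}, but carried out for regularly \emph{open} sets and in the merely locally compact setting, and then to observe that a lattice isomorphism transports this construction verbatim. Concretely, I would topologise a suitable family of filters of $A$ by the (induced) base $U_a$ of \eqref{baseets}, produce a homeomorphism $X\cong\mathrm{pt}(A)$, where $\mathrm{pt}(A)$ denotes this ``point space'' of $A$, and likewise $Y\cong\mathrm{pt}(B)$. Since $\mathrm{pt}(\cdot)$ depends only on the isomorphism type of the lattice (and by Lemma~\ref{latticesaresuperb} an order isomorphism \emph{is} a lattice isomorphism), the hypothesis that $A$ and $B$ are isomorphic would immediately yield $\mathrm{pt}(A)\cong\mathrm{pt}(B)$ and hence the desired homeomorphism $X\cong Y$.

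The first task is to pin down which filters play the role of points. Each $x\in X$ determines the family $\mathscr{F}_x=\{U\in A\colon x\in U\}$, and since $A$ is a base for the Hausdorff space $X$, the assignment $x\mapsto\mathscr{F}_x$ is injective. The crucial subtlety --- and the step I expect to be the main obstacle --- is that $\mathscr{F}_x$ need \emph{not} be a prime filter of $A$: because the lattice join is $U\vee_{\mathrm{ro}}V=\interior\overline{U\cup V}$, a boundary point may lie in $U\vee_{\mathrm{ro}}V$ without lying in either $U$ or $V$ (take $U=(-1,0)$, $V=(0,1)$ and $x=0$ in $\mathbb{R}$). Thus points correspond not to arbitrary prime filters but to filters that are prime with respect to genuine covers, that is, to the completely prime filters of the frame generated by $A$. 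Here the R-lattice hypotheses enter decisively: because every member of $A$ has compact closure, any cover of such a set by members of $A$ admits a finite subcover, so one expects complete primeness to collapse to ordinary finitary primeness on the relevant sets, and the compactly-contained relation ``$\overline{U}\subseteq V$'' to become expressible through the lattice operations alone. Establishing this --- that the way-below relation is lattice-definable, so that the class of point-filters is invariant under abstract lattice isomorphisms --- is the genuine content of the theorem.

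Granting that identification, the remaining steps are routine verifications of the kind already performed in the paper. Injectivity and the Hausdorff separation of $\mathrm{pt}(A)$ follow from $A$ being a base for a Hausdorff space; that the correspondence $x\mapsto\mathscr{F}_x$ carries the basic open sets of $X$ onto the sets $U_a$ and conversely, exactly as in the proof of Proposition~\ref{pi}, gives continuity in both directions; and surjectivity --- the assertion that every point-filter actually converges to a point of $X$ --- is precisely where local compactness together with the compact-closure condition is used, playing the role that compactness of $X$ played in Remark~\ref{compactlattice}. Since $X$ is not assumed compact, one cannot simply invoke convergence of ultrafilters globally, and the compact-closure axiom serves to \emph{localise} that argument. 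Once the way-below relation is shown to be recoverable from the lattice structure, the transport of the whole construction along the isomorphism between $A$ and $B$, and therefore the conclusion $X\cong Y$, follow immediately.
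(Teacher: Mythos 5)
You should first be aware that the paper contains no proof of Theorem~\ref{shirota} to compare against: the result is imported as a black box, explicitly described as a restatement of \cite[Theorem 2]{shi}, and the non-standard definition of an R-lattice is adopted precisely so that \cite[Theorem 1]{shi} makes it agree with Shirota's original axiomatic one. The paper's only work in this direction is the \emph{application}: verifying in the alternative proof of Theorem~\ref{mainish} that the lattices $\{\sigma(f)\colon f\in C(X)\}$ satisfy the quoted hypotheses. Your attempt must therefore be judged as a reconstruction of Shirota's argument from scratch.

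Judged that way, it has a genuine gap --- one you locate yourself but do not close. Your diagnosis of the obstacle is correct and is the strongest part of the write-up: the point filters $\mathscr{F}_x=\{U\in A\colon x\in U\}$ are in general not prime with respect to $\vee_{\rm ro}$ (your example $U=(-1,0)$, $V=(0,1)$, $x=0$ is exactly right), so the construction of Proposition~\ref{pi}, which works for the closed-set lattice $\Theta(X)$ on a compact space via Remark~\ref{compactlattice}, cannot be transported verbatim. The repair you propose --- characterise the point filters through the well-inside relation $\overline{U}\subseteq V$ and show that this relation is expressible from $\wedge$ and $\vee$ alone, using the compact-closure hypothesis --- is indeed the standard route and is, in essence, what Shirota does. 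But your proposal never carries it out: you write that one ``expects'' complete primeness to collapse to finitary primeness, declare that establishing lattice-definability ``is the genuine content of the theorem'', and then proceed ``granting that identification''. Since the invariance of $\mathrm{pt}(A)$ under an \emph{abstract} lattice isomorphism rests entirely on that definability --- without it, an isomorphism $A\to B$ has no reason to carry point filters to point filters --- what you have is a correct plan whose essential step, the actual content of the theorem, is missing; the parts you do sketch (injectivity, the base correspondence, surjectivity from local compactness) are, as you say, routine once that step is granted. A complete proof needs either an explicit finitary lattice formula for $\overline{U}\subseteq V$ valid under the compact-closure hypothesis, or Shirota's axiomatisation of R-lattices; neither is a formality. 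A minor secondary slip: for an open-set lattice the correct basic open sets of the point space are $\{\mathscr{F}\colon a\in\mathscr{F}\}$ --- the ``Zariski-type'' base mentioned in the paper's Note --- rather than the sets $U_a$ of \eqref{baseets}, which are adapted to the closed-set lattice and would here correspond to complements.
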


\begin{proof}[An alternative proof of Theorem~\ref{mainish}]Let $T\colon C(X)\to C(Y)$ be a compatibility isomorphism. By Proposition~\ref{P:tau}, the map $\tau(\sigma(f)) = \sigma(Tf)$ ($f\in C(X)$) is an inclusion-preserving bijection between the lattices $$A:=\{\sigma(f)\colon f\in C(X)\}\subseteq {\rm RO}(X)\text{ and }B:=\{\sigma(f)\colon f\in C(Y)\}\subseteq {\rm RO}(Y).$$ Consequently, by Lemma~\ref{latticesaresuperb}, $\tau$ is a lattice isomorphism. As $X$ and $Y$ are compact, closures of members of $A$ and $B$ are compact, so $A$ and $B$ are R-lattices. Finally, by Theorem~\ref{shirota}, $X$ and $Y$ are homeomorphic.\end{proof}

\section{Automatic continuity of compatibility isomorphisms---Proofs of Theorems~\ref{T:ContPos} and \ref{T:Discont}}

This section is devoted to further investigations of the map $\tau$, which appeared in Proposition~\ref{P:tau}, that will finally lead to proofs of Theorems~\ref{T:ContPos} and \ref{T:Discont}.

\begin{proposition}\label{P:ClopenToClopen}
Let $X$ and $Y$ be completely regular spaces such that there exists a~compatibility isomorphism $T\colon C(X)\to C(Y)$. If $U\subseteq X$ is clopen, then $\tau(X\setminus U)=Y\setminus\tau(U)$. In particular, $\tau(U)$ is clopen. 
\end{proposition}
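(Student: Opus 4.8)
The plan is to represent the clopen set and its complement by indicator functions and then to upgrade the purely lattice-theoretic information about $\tau$ to an honest clopen partition of $Y$. First I would observe that, since $U$ is clopen, both $\mathds{1}_U$ and $\mathds{1}_{X\setminus U}$ belong to $C(X)$, and that $\sigma(\mathds{1}_U)=U$, $\sigma(\mathds{1}_{X\setminus U})=X\setminus U$; hence $U$ and $X\setminus U$ lie in the domain of $\tau$. Writing $h=T\mathds{1}_U$ and $k=T\mathds{1}_{X\setminus U}$, we have $\tau(U)=\sigma(h)$ and $\tau(X\setminus U)=\sigma(k)$, and the whole proposition reduces to the claim that $\sigma(h)$ and $\sigma(k)$ form a partition of $Y$ into two clopen pieces.

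The disjointness half is easy: as $\mathds{1}_U\cdot\mathds{1}_{X\setminus U}=0$, Lemma~\ref{additive} shows $h$ and $k$ are orthogonal, whence $\sigma(h)\cap\sigma(k)=\varnothing$ by the clause (iv) recorded in Remark~\ref{clause4}. The substance is the equality $\sigma(h)\cup\sigma(k)=Y$, i.e. that the ``seam'' $Y\setminus(\sigma(h)\cup\sigma(k))$ is empty. This is the step I expect to be the main obstacle, since the join in ${\rm RO}(Y)$ is not set-theoretic union: $\tau$ preserving $\vee_{\rm ro}$ only tells us $\sigma(h)\cup\sigma(k)$ is dense, and a generic orthogonal pair with dense union need not cover the whole space, so the lattice structure alone cannot finish the argument.

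To close the gap I would argue by contradiction, exploiting the clopen partition of $X$ to decompose arbitrary functions on $Y$. Suppose $y_0\in Y\setminus(\sigma(h)\cup\sigma(k))$ and let $g\in C(Y)$ be arbitrary; set $f=T^{-1}g$. Then $f=f\mathds{1}_U+f\mathds{1}_{X\setminus U}$, the two summands are orthogonal (their product is $f^2\mathds{1}_U\mathds{1}_{X\setminus U}=0$), so by Lemma~\ref{additive} we may write $g=T(f\mathds{1}_U)+T(f\mathds{1}_{X\setminus U})=:g_1+g_2$. Since $\sigma(f\mathds{1}_U)\subseteq U=\sigma(\mathds{1}_U)$ and $\sigma(f\mathds{1}_{X\setminus U})\subseteq\sigma(\mathds{1}_{X\setminus U})$, Proposition~\ref{wd} yields $\sigma(g_1)\subseteq\sigma(h)$ and $\sigma(g_2)\subseteq\sigma(k)$. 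Now $y_0\notin\sigma(h)$ forces $g_1(y_0)=0$, for otherwise $y_0$ would lie in the open set $\{g_1\neq0\}\subseteq\sigma(g_1)\subseteq\sigma(h)$; likewise $g_2(y_0)=0$, so $g(y_0)=0$. As $g\in C(Y)$ was arbitrary and $Y$ is completely regular, this is absurd (take any $g$ with $g(y_0)\neq0$). Hence no such $y_0$ exists and $\sigma(h)\cup\sigma(k)=Y$.

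Finally I would assemble the pieces. The sets $\sigma(h)$ and $\sigma(k)$ are disjoint open subsets covering $Y$, so each is the set-complement of the other and both are therefore clopen. In particular $\tau(X\setminus U)=\sigma(k)=Y\setminus\sigma(h)=Y\setminus\tau(U)$, and $\tau(U)=\sigma(h)$ is clopen, which is exactly the assertion.
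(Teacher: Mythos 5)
Your proof is correct, and while it starts from the same decomposition as the paper ($\mathds{1}_X=\mathds{1}_U+\mathds{1}_{X\setminus U}$, orthogonality transferred by Lemma~\ref{additive}), it diverges at exactly the point you flag as the main obstacle, and your treatment there is in fact tighter than the paper's. The paper first shows $\tau(X)=Y$ by a separate orthogonality argument, and then passes from the pointwise identity $\{T\mathds{1}_X\neq 0\}=\{T\mathds{1}_U\neq 0\}\sqcup\{T\mathds{1}_{X\setminus U}\neq 0\}$ to $\sigma(T\mathds{1}_X)=\sigma(T\mathds{1}_U)\cup\sigma(T\mathds{1}_{X\setminus U})$ ``by applying closure and interior operations''. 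Taken literally, that step invokes the identity $\interior(F\cup G)=\interior F\cup\interior G$ for closed sets, which fails in general even for supports of orthogonal functions: on $Y=[-1,1]$ with $h(t)=\max\{t,0\}$ and $k(t)=\max\{-t,0\}$ one has $hk=0$ and $\supp h\cup\supp k=Y$, yet $\sigma(h)\cup\sigma(k)$ misses the origin. So the covering assertion genuinely needs an argument that uses more than orthogonality plus density, and your decomposition trick supplies precisely that: writing an arbitrary $g\in C(Y)$ as $T(T^{-1}g\cdot\mathds{1}_U)+T(T^{-1}g\cdot\mathds{1}_{X\setminus U})$ via Lemma~\ref{additive}, and localising the two summands inside $\tau(U)$ and $\tau(X\setminus U)$ by Proposition~\ref{wd}, shows that any point of $Y\setminus(\tau(U)\cup\tau(X\setminus U))$ would be a common zero of every function in $C(Y)$, which is absurd. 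Note also that your argument makes the paper's preliminary step $\tau(X)=Y$ unnecessary (it is the special case $U=X$), whereas the paper's route cannot dispense with it. In short: same skeleton, but at the crucial step your proof is a complete argument where the paper's is, at best, an elliptical one that implicitly relies on the very point you prove.
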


\begin{proof}Let us start with the easy observation that $\tau(X)=Y$. Assume not. Since we have $\tau(X)=\sigma(T\mathds{1}_X)$, we also have $\supp (T\mathds{1}_X)\neq Y$. Thus there exists a~non-zero function $g\in C(Y)$ such that $\supp g \cap \supp (T\mathds{1}_X) = \varnothing$. By applying $\tau^{-1}$ to $\sigma(g)$ and $\sigma (T\mathds{1}_X)$, one readily obtains a contradiction.\smallskip

Further, $\mathds{1}_X= \mathds{1}_U + \mathds{1}_{X\setminus U}$, and obviously $\mathds{1}_U$ and $\mathds{1}_{X\setminus U}$ are orthogonal. By Lemma~\ref{additive}, we have that $T\mathds{1}_U$ and $T\mathds{1}_{X\setminus U}$ are orthogonal as well and that $T\mathds{1}_X= T\mathds{1}_U+ T\mathds{1}_{X\setminus U}$. It follows that
$$ \{y\in Y \colon (T\mathds{1}_X)(y)\neq 0\} = \{y\in Y \colon (T\mathds{1}_U)(y)\neq 0\} \cup \{y\in Y \colon (T\mathds{1}_{X\setminus U})(y)\neq 0\}$$
with the union disjoint. By applying closure and interior operations to both sides of the last equality, we conclude that
$$\sigma(T\mathds{1}_X)= \sigma(T\mathds{1}_U) \cup \sigma(T\mathds{1}_{X\setminus U}),$$
and it is easy to see from the continuity of both functions on the right-hand side that this union must be disjoint too. This in turn implies that 
$$Y=\tau(X)=\tau(U)\cup\tau(X\setminus U).$$
As $\tau(U)$ and $\tau(X\setminus U)$ are disjoint and open, we conclude that they are in fact clopen.
\end{proof}

\begin{proposition}\label{P:EqualOnClopen}
Let $X$ and $Y$ be completely regular spaces. Suppose moreover that $T\colon C(X)\to C(Y)$ is a~compatibility isomorphism. If $U\subseteq X$ is clopen and $f,g\in C(X)$ agree on $U$, then $Tf$ and $Tg$ agree on $\tau(U)$.
\end{proposition}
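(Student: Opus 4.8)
The plan is to exploit the clopenness of $U$ to split both $f$ and $g$ into orthogonal pieces living over $U$ and over $X\setminus U$, and then to use Lemma~\ref{additive} to make $T$ behave additively on this decomposition. Since $U$ is clopen, the indicator functions $\mathds{1}_U$ and $\mathds{1}_{X\setminus U}$ are continuous, and hence so are $f\mathds{1}_U$, $f\mathds{1}_{X\setminus U}$, $g\mathds{1}_U$, and $g\mathds{1}_{X\setminus U}$. For each of $f$ and $g$ the two pieces are orthogonal and sum to the original function, so Lemma~\ref{additive} yields
$$Tf = T(f\mathds{1}_U) + T(f\mathds{1}_{X\setminus U}) \quad\text{and}\quad Tg = T(g\mathds{1}_U) + T(g\mathds{1}_{X\setminus U}).$$
Because $f$ and $g$ agree on $U$ we have the literal identity $f\mathds{1}_U = g\mathds{1}_U$, whence $T(f\mathds{1}_U)=T(g\mathds{1}_U)$; so it remains only to show that the two remaining summands both vanish identically on $\tau(U)$.

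To control those summands I would track the $\sigma$-sets. Since $f\mathds{1}_{X\setminus U}$ vanishes on the open set $U$, we have $\sigma(f\mathds{1}_{X\setminus U})\subseteq \sigma(\mathds{1}_{X\setminus U})=X\setminus U$, and Proposition~\ref{wd} gives $\sigma(T(f\mathds{1}_{X\setminus U}))\subseteq \sigma(T\mathds{1}_{X\setminus U})=\tau(X\setminus U)$. By Proposition~\ref{P:ClopenToClopen} the latter equals $Y\setminus\tau(U)$, which is clopen; the same chain applies to $g\mathds{1}_{X\setminus U}$. Writing $h:=T(f\mathds{1}_{X\setminus U})$, suppose some $y\in\tau(U)$ had $h(y)\neq 0$; then $\{h\neq 0\}\cap\tau(U)$ would be a non-empty open set, hence contained in $\sigma(h)=\interior\supp h\subseteq Y\setminus\tau(U)$, contradicting its being a subset of $\tau(U)$. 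Thus $h\equiv 0$ on $\tau(U)$, and likewise $T(g\mathds{1}_{X\setminus U})\equiv 0$ on $\tau(U)$.

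Combining these observations, for every $y\in\tau(U)$ we obtain
$$Tf(y) = T(f\mathds{1}_U)(y) = T(g\mathds{1}_U)(y) = Tg(y),$$
which is the desired conclusion. The step I expect to be the crux is the middle paragraph: verifying that the \emph{off-}$U$ pieces are pushed by $T$ to functions whose support misses $\tau(U)$. This is exactly where the two previously established facts must be combined — the monotonicity of $\tau$ from Proposition~\ref{wd} to locate $\sigma(T(f\mathds{1}_{X\setminus U}))$ inside $\tau(X\setminus U)$, and the clopenness statement $\tau(X\setminus U)=Y\setminus\tau(U)$ from Proposition~\ref{P:ClopenToClopen} to turn an inclusion of $\sigma$-sets into genuine vanishing on the open set $\tau(U)$. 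Everything else is the bookkeeping of the orthogonal decomposition afforded by Lemma~\ref{additive}.
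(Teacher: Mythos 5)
Your proposal is correct and takes essentially the same route as the paper's proof: the identical orthogonal decomposition $f=f\cdot\mathds{1}_U+f\cdot\mathds{1}_{X\setminus U}$ (and likewise for $g$), additivity of $T$ on orthogonal pairs via Lemma~\ref{additive}, and the inclusion $\sigma(T(f\cdot\mathds{1}_{X\setminus U}))\subseteq\tau(X\setminus U)=Y\setminus\tau(U)$ obtained from the monotonicity of $\tau$ (Proposition~\ref{wd}, equivalently Proposition~\ref{P:tau}) together with Proposition~\ref{P:ClopenToClopen}. The only difference is cosmetic: your closing argument spells out explicitly the step the paper declares ``clear'', namely that $\{h\neq 0\}$ is an open subset of $\sigma(h)$, so $\sigma(h)\subseteq Y\setminus\tau(U)$ forces $h$ to vanish on $\tau(U)$.
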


\begin{proof}
We have
$$ f =f\cdot \mathds{1}_U + f\cdot \mathds{1}_{X\setminus U} \quad\text{and}\quad g =f\cdot \mathds{1}_U + g\cdot \mathds{1}_{X\setminus U},$$
and so by the hypotheses and using Lemma~\ref{additive} we conclude that
\begin{equation}\label{E:TfTgDecomp}
Tf =T(f\cdot \mathds{1}_U) + T(f\cdot \mathds{1}_{X\setminus U}) \quad\text{and}\quad Tg =T(f\cdot \mathds{1}_U) + T(g\cdot \mathds{1}_{X\setminus U}).
\end{equation}
Clearly, $\sigma(f\cdot \mathds{1}_{X\setminus U}) \subseteq  \sigma(\mathds{1}_{X\setminus U}) = X\setminus U$, whence, by Proposition~\ref{P:tau} and Proposition~\ref{P:ClopenToClopen} 
$$\sigma(T(f\cdot \mathds{1}_{X\setminus U})) \subseteq \tau(X\setminus U)= Y\setminus \tau(U).$$
Likewise,
$$\sigma(T(g\cdot \mathds{1}_{X\setminus U})) \subseteq \tau(X\setminus U)= Y\setminus \tau(U).$$
It is now clear from \eqref{E:TfTgDecomp} that $Tf$ and $Tg$ coincide on $\tau(U)$, which concludes the proof.
\end{proof}

\begin{proof}[Proof of Theorem~\ref{T:ContPos}]
We shall prove the theorem by contradiction. Let us assume that $T\colon C(X)\to C(Y)$ is a discontinuous compatibility isomorphism; this means that there exist $f\in C(X)$, a sequence $(f_n)_{n=1}^\infty$ in $ C(X)$, and $\eta>0$ such that 
\begin{equation}\label{E:claim2}
\| f_k - f\|_\infty \longrightarrow 0 \text{ as }k\to\infty\text{ and }\|Tf_n-Tf\|_\infty > \eta\quad (n\in \mathbb{N}).
\end{equation}

We \emph{claim} that there exist natural numbers $n_1<n_2<\dots$ and pairwise disjoint clopen sets $V_j\subseteq Y$ such that for each $j\in\mathbb{N}$ there exists $y_j\in V_j$ with 
\begin{equation}\label{E:claim}
|Tf_{n_j}(y_j)-Tf(y_j)|>\eta.
\end{equation}

In order to prove the claim, let us consider the equivalence relation $E$ on $Y$ determined by the decomposition of $Y$ into its connected components, and the natural quotient mapping $q\colon X\to Y/E$; we shall regard $q(x)$ simultaneously as an element of $Y/E$ and as~a subset of $Y$, and use the notation $[x]=q(x)$ (so $[x]$ is the component of $Y$ containing $x$). By \cite[Theorem 6.2.24]{engelking}, the quotient space $Y/E$ is zero-dimensional (and compact). Let us observe that, in addition to that, $Y/E$ is also perfect. Indeed, if $[x]\in Y/E$ is isolated, then $\{[x]\}$ is a clopen set, and thus so must be $[x]$ when regarded as a subset of $Y$; in particular, $[x]$ has non-empty interior, contrary to our assumption. \smallskip

From \eqref{E:claim2} we infer that we may choose a sequence $(\tilde{y}_j)_{j=1}^\infty$ in $Y$ such that 
\begin{equation}\label{E:claim3}
|Tf_j(\tilde{y}_j) - Tf(\tilde{y}_j)|>\eta\quad (j\in\mathbb{N}).
\end{equation}
Since $X$ is homeomorphic to $Y$ as asserted by Theorem~\ref{mainish}, our assumptions imply that both spaces are sequentially compact, so $([\tilde{y}_j])_{j=1}^\infty$ has an accumulation point $[\tilde{y}]$, say. Now, consider $[\tilde{y}]$ as a subset of $Y$. It is possible that $\tilde{y}_1\in [\tilde{y}]$; in fact, we might even have $[\tilde{y}_j] = [\tilde{y}]$ for all $j$. Because of this we need to choose a further auxiliary sequence $(\tilde{z}_j)_{j=1}^\infty$.

The recursive construction of this sequence begins by choosing any point  $\tilde{z}_1\in Y\setminus [\tilde{y}]$ and a clopen set $\V_1\subseteq Y/E$ such that $|Tf_1(\tilde{z}_1)-Tf(\tilde{z}_1)|>\eta$, $[\tilde{z}_1]\in\V_1$ and $[\tilde{y}]\notin\V_1$; this can be done because $[\tilde{y}]$ is nowhere dense by the assumption of the theorem, $Tf$ and $Tf_1$ are continuous, and $Y/E$ is zero-dimensional. Set $n_1=1$. Next, assume we have chosen natural numbers $n_1<n_2<\ldots<n_k$, points $\tilde{z}_1,\ldots, \tilde{z}_k\in Y$ and pairwise disjoint clopen sets $\V_1,\ldots, \V_k\subseteq Y/E$ such that for each $j=1,\ldots,k$ we have $[\tilde{z}_j ]\in\V_j$ and $|Tf_{n_j}(\tilde{z}_j)-Tf(\tilde{z}_j)|>\eta$, and $[\tilde{y}]\notin \bigcup_{j=1}^k \V_j$. Since $[\tilde{y}]$ is an accumulation point of $([\tilde{y}_j])_{j=1}^\infty$, we can find a natural number $n_{k+1}>n_k$ such that $[\tilde{y}_{n_{k+1}}]\notin \bigcup_{j=1}^k \V_j$. Now, using the fact that $Y/E$ is perfect, we find the next point $\tilde{z}_{k+1}$ so close to $\tilde{y}_{n_{k+1}}$ that $|Tf_{n_{k+1}}(\tilde{z}_{k+1})-Tf(\tilde{z}_{k+1})|>\eta$ and so that $[\tilde{z}_{k+1}]\notin\bigcup_{j=1}^k\V_j$ and $[\tilde{z}_{k+1}]\neq[\tilde{y}]$---similarly as in the first step, we use \eqref{E:claim3}, the continuity of $Tf$ and $Tf_{n_{k+1}}$, and the nowhere denseness of $[\tilde{y}_{n_k+1}]$ to achieve that. We also pick a clopen set $\V_{k+1}$ disjoint from all $\V_j$, $j=1,\ldots,k$, and such that $[\tilde{z}_{k+1}]\in \V_{k+1}$ and $[\tilde{y}]\notin \V_{k+1}$. Finally, having finished the recursive construction, we define $y_j=\tilde{z}_j$ and $V_j=q^{-1}(\V_j)$, concluding the proof of the claim. \smallskip

In order to obtain the desired contradiction, we now aim to construct  $\tilde{f}\in C(X)$ which cannot be mapped by $T$ to a continuous function on $Y$. First we need to consider the mapping $\tau$ from Proposition~\ref{P:tau}; $\tau$ is an inclusion-preserving bijection between $\{\sigma(f)\colon f\in C(X)\}$ and $\{\sigma(g)\colon g\in C(Y)\}$ whose inverse also preserves inclusion. Observe that clopen sets in $Y$ are of the form $\sigma(g)$ because the corresponding characteristic functions are continuous. Hence, we may set $U_j:=\tau^{-1}(V_j)$ ($j\in\mathbb{N}$), and we know from Proposition~\ref{P:ClopenToClopen} that the sets $U_j$ are clopen. Moreover, by passing to a subsequence, we can guarantee that the sequence $(y_j)_{j=1}^\infty$ converges to a point $z\in Y$.

We define the function $\tilde{f}$ as follows:
$$\tilde{f}(x)=
\begin{cases}
f_{n_{2j}}(x), & x\in U_{2j}, \\
f(x), & x\in X\setminus \bigcup_{j=1}^\infty U_{2j}.
\end{cases}
$$
Let us observe that $\tilde{f} \in C(X)$; this is equivalent to showing that $h:=\tilde{f}-f$ is continuous. To this end, let us take any open set $G$ in the scalar field. We shall distinguish two cases: if $0\notin G$, then $h^{-1}(G)\subseteq \bigcup_{j=1}^\infty U_{2j}$, so we obtain that
$$h^{-1}(G)=\bigcup_{j=1}^\infty \left(h^{-1}(G)\cap U_{2j} \right) = \bigcup_{j=1}^\infty\left( (f_{n_{2j}}-f)^{-1}(G)\cap U_{2j} \right),$$
which is an open set. On the other hand, if $0\in G$, then we may pick $\varepsilon>0$ and $j_0\in\mathbb{N}$ such that $U(0,\varepsilon)\subseteq G$ and for all $j\geqslant j_0$, $\|f_{n_{2j}}-f\|<\varepsilon$. That is, for all $j\geqslant j_0$ we have
$$ (f_{n_{2j}}-f)^{-1}(G)\supseteq (f_{n_{2j}}-f)^{-1}\big(U(0,\varepsilon)\big) = X.$$
The following computations demonstrate that $h^{-1}(G)$ is open:
\begin{equation*}
\begin{aligned}
h^{-1}(G) & = \left(h^{-1}(G)\cap \Big(X\setminus\bigcup_{j=1}^\infty U_{2j} \Big) \right) \cup \left(h^{-1}(G)\cap\bigcup_{j=1}^\infty U_{2j}  \right) \\
& = \left( X\setminus\bigcup_{j=1}^\infty U_{2j} \right) \cup \bigcup_{j=1}^\infty \left( (f_{n_{2j}}-f)^{-1}(G)\cap U_{2j} \right) \\
& = \left( X\setminus\bigcup_{j=1}^\infty U_{2j} \right) \cup \bigcup_{j=j_0}^\infty U_{2j} \cup \bigcup_{j=1}^{j_0-1} \left( (f_{n_{2j}}-f)^{-1}(G)\cap U_{2j} \right) \\
& =  \left( X\setminus\bigcup_{j=1}^{j_0-1} U_{2j} \right) \cup \bigcup_{j=1}^{j_0-1} \left( (f_{n_{2j}}-f)^{-1}(G)\cap U_{2j} \right).
\end{aligned}
\end{equation*}

As $\tilde{f}$ is continuous, $T$ is defined at $\tilde{f}$. We may now turn our attention to the (\emph{a~priori} continuous) function $T\tilde{f}$. By Proposition~\ref{P:EqualOnClopen}, for each $j\in\mathbb{N}$, 
$$T\tilde{f}=Tf\text{ in }V_{2j-1}=\tau(U_{2j-1}) \quad\text{and}\quad T\tilde{f}=Tf_{n_{2j}}\text{ in }V_{2j}=\tau(U_{2j});$$  
since $y_j\in V_j$, we now have for each $j$,
$$T\tilde{f}(y_{2j-1})=Tf(y_{2j-1}) \quad\text{and}\quad  T\tilde{f}(y_{2j})=Tf_{n_2j}(y_{2j}).$$
It follows from the above considerations, the fact that $y_j\longrightarrow z$ and continuity of $Tf$ that
$$\lim_{j\to\infty} T\tilde{f}(y_{2j-1}) = \lim_{j\to\infty} Tf(y_{2j-1}) = Tf(z).$$
On the other hand, by \eqref{E:claim}, 
$$\lim_{j\to\infty} T\tilde{f}(y_{2j}) = \lim_{j\to\infty} Tf_{n_{2j}}(y_{2j}) \neq \lim_{j\to\infty} Tf(y_{2j}) =  Tf(z).$$
This contradicts the continuity of $T\tilde{f}$, and the proof is complete.
\end{proof}

The next simple lemma will be useful in the proof of Theorem~\ref{T:Discont}.

\begin{lemma}\label{L:connprec}
Let $X$ be a compact Hausdorff space, $F\subseteq X$ be connected, $f,g\in C(X)$ satisfy $g\preceq f$, and let $f(x)\neq 0$ for each $x\in F$. Then either $g(x)=f(x)$ for each $x\in F$ or $g(x)=0$ in $F$.
\end{lemma}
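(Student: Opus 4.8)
The plan is to partition $F$ into the set where $g$ agrees with $f$ and the set where $g$ vanishes, to show that both pieces are relatively open in $F$, and then to invoke the connectedness of $F$ to force one of them to be empty.

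First I would record a pointwise dichotomy valid on $F$. Fix $x\in F$; since $f(x)\neq 0$ there are two mutually exclusive possibilities. If $g(x)\neq 0$, then $x$ belongs to $\{y\in X\colon g(y)\neq 0\}\subseteq\supp g$, so the hypothesis $g\preceq f$ forces $g(x)=f(x)$. If instead $g(x)=0$, then $g(x)=0\neq f(x)$. Hence, throughout $F$,
$$ g(x)=f(x)\iff g(x)\neq 0 \qquad\text{and}\qquad g(x)=0\iff g(x)\neq f(x)\qquad(x\in F). $$

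Accordingly, set $A=\{x\in F\colon g(x)=f(x)\}$ and $B=\{x\in F\colon g(x)=0\}$; by the dichotomy these sets are disjoint and cover $F$. The crux is that each of them is relatively open in $F$. Indeed, $A=\{y\in X\colon g(y)\neq 0\}\cap F$ is open because $g$ is continuous, whereas the reformulation $B=\{y\in X\colon g(y)\neq f(y)\}\cap F$---which is exactly the content of the dichotomy---exhibits $B$ as the intersection of $F$ with an open set. Since $F$ is connected and $F=A\cup B$ is a disjoint union of two relatively open sets, one of them must be empty: if $B=\varnothing$ then $g=f$ on $F$, and if $A=\varnothing$ then $g\equiv 0$ on $F$, which is the assertion of the lemma.

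The only step requiring a moment's thought---and the sole obstacle worth flagging---is the openness of the vanishing set $B$, which \emph{a priori} is merely relatively closed in $F$; it is precisely the hypothesis $f\neq 0$ on $F$, through the identity $B=\{g\neq f\}\cap F$, that upgrades $B$ to a relatively open set and thereby unlocks the connectedness argument. Note that compactness and the Hausdorff property of $X$ play no role here; continuity of $f$ and $g$ is all the argument consumes.
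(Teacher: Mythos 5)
Your proof is correct and takes essentially the same route as the paper's: both rest on the pointwise dichotomy that $g\preceq f$ forces $g(x)\in\{0,f(x)\}$, split $F$ into the agreement set and the vanishing set, and conclude by connectedness. The only cosmetic difference is that you exhibit both pieces as relatively open directly (via $\{g\neq 0\}$ and $\{g\neq f\}$), whereas the paper shows the single set $H=\{x\in F\colon f(x)=g(x)\}$ is clopen---closed by continuity and open as the complement of the relatively closed zero set---which is the same argument in dual bookkeeping.
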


\begin{proof}
Set $H:=\{x\in F\colon f(x)=g(x) \}$; then $H$ is closed in $F$ as both functions are continuous. On the other hand, from the definition of compatibility ordering it follows that for each $x\in X$, either $g(x)=0$ or $g(x)=f(x)$. Hence, $H=F\setminus K$ where $$K=\{x\in F\colon g(x)=0\}.$$ Since $K$ is closed in $F$, $H$ must be open in $F$. So $H$ is in fact clopen in $F$, and from the connectedness of $F$ it follows that either $H=F$ or $H=\varnothing$.
\end{proof}

\begin{proof}[Proof of Theorem~\ref{T:Discont}]
Let $F$ be a connected component of $X$ with non-empty interior, set $\A = \{f\in C(X) \colon f(x)\neq 0 \text{ for all }x\in F \}$, $\tilde{\A}=\{f|_F\colon f\in\A\}$ and let us pick any bijection $\tilde{\varphi}\colon \tilde{\A}\to \tilde{\A}$ which is not continuous (with respect to the topology at question) and satisfies that for any $\tilde{f}\in \tilde{\A}$, 
\begin{equation}\label{E:boundary}
(\tilde{\varphi}\tilde{f
})|_{\partial F}=\tilde{f}|_{\partial F}
\end{equation}
(of course, $\partial F$, the boundary of $F$, is taken with respect to $X$).
One can readily see that such bijections exist in abundance thanks to the interior of $F$ being non-empty. To give a very simple example, consider a non-negative continuous bump function $\tilde{g}\in C(F)$ such that $\supp \tilde{g}\cap \partial F = \varnothing$, and define $\tilde{f}_1$ to be the constant $1$ function on $F$ and $\tilde{f}_2=\tilde{f}_1+\tilde{g}$. Then $\tilde{f}_1, \tilde{f}_2 \in \tilde{\A}$ and we may define a bijection $\tilde{\varphi}\colon\tilde{\A}\to\tilde{\A}$ by simply switching $\tilde{f}_1$ and $\tilde{f}_2$, that is, $\tilde{\varphi}(\tilde{f}_1)=\tilde{f}_2$, $\tilde{\varphi}(\tilde{f}_2)=\tilde{f}_1$ and $\tilde{\varphi} (\tilde{f})=\tilde{f}$ for any $\tilde{f}\in \tilde{\A}\setminus\{\tilde{f}_1, \tilde{f}_2\}$. Clearly, $\tilde{\varphi}$ is a~bijection satisfying condition \eqref{E:boundary}, and if $\tilde{g}\neq 0$, then it is clearly not continuous (in either of the two topologies). \smallskip

Having chosen any $\tilde{\varphi}\colon\tilde{\A}\to\tilde{\A}$ as described above, we can now extend its domain to the whole of $C(F)$ using simply the identity, so $\tilde{\varphi}(\tilde{f})=\tilde{f}$ for any $\tilde{f}\in C(F)\setminus \tilde{\A}$; finally, we define $\varphi\colon C(X)\to C(X)$ as follows.
\begin{equation*}
(\varphi f)(x):=
\begin{cases}
f(x)\quad & \text{if }x\in X\setminus F,\\
(\tilde{\varphi}(f|_F))(x) \quad & \text{if } x\in F.
\end{cases}
\end{equation*}
Condition \eqref{E:boundary} and the definitions of $\tilde{\varphi}$ and $\varphi$ make it clear that for any $f\in C(X)$ it is true that $\varphi(f)\in C(X)$. Furthermore, we know that $\tilde{\varphi}$ is a discontinuous bijection, which easily makes $\varphi\colon C(X)\to C(X)$ a discontinuous bijection, and we are left to show that $\varphi$ is also a compatibility isomorphism.\smallskip

To that end, let us first observe that $\varphi(f)=f$ for all $f\in C(X)\setminus\A$; it is also plain that $(\varphi f)|_{X\setminus F}= f|_{X\setminus F}$ even for $f\in\A$. Now, pick any $f,g\in C(X)$ with $f\preceq g$. If $g\notin \A$, then $f\notin \A$, whence $\varphi(f)=f\preceq g=\varphi(g)$. If $f,g\in \A$, then (due to Lemma~\ref{L:connprec}) $f|_F=g|_F$, whence $(\varphi f)|_F = (\varphi g)|_F$ by the definition of $\varphi$, and above we have observed $(\varphi f)|_{X\setminus F}= f|_{X\setminus F}$ and the same for $g$; it follows from these three facts that $\varphi(f)\preceq\varphi(g)$. Finally, if $g\in \A$ and $f\notin \A$, then Lemma~\ref{L:connprec} yields that $f(x)=0$ for all $x\in F$ and (as $f\notin \A$) we know the same to be true for $\varphi(f)=f$. As $\varphi$ leaves both $f$ and $g$ unchanged in the complement of $F$, it is again obvious that $\varphi(f)\preceq\varphi(g)$. This completes the discussion of possible cases and the proof.
\end{proof}

\end{document}